\documentclass[a4paper,11pt]{article}
\usepackage[centertags]{amsmath}
\usepackage{bm}
\usepackage{amsthm}
\usepackage{amssymb}
\usepackage[utf8]{inputenc}
\usepackage[english]{babel}
\usepackage{setspace}
\usepackage{tikz}

\usepackage{dsfont}

\newcommand{\ud}{{\mathrm{d}}}
\newcommand{\uD}{\mathrm{D}}

\newcommand{\base}{{{B}}}
\newcommand{\hauteur}{{{L}}}
\newcommand{\hauteursur}{{{\hauteur^+}}}
\newcommand{\hauteursous}{{{\hauteur^-}}}
\newcommand{\Interv}[1]{{{\mathcal{I}^{#1}}}}
\newcommand{\Interveta}[2]{{{\mathcal{I}_{#1}^{#2}}}}

\DeclareMathOperator{\Lapl}{\triangle}
\DeclareMathOperator{\Rot}{\bm{curl}}
\DeclareMathOperator{\Div}{{div}}

%fonts Gilles
\def\R{\mathbb{R}}
\def\D{{\mathcal D}}
\def\curl{\mbox{curl }}
\def\div{\mbox{div }}
\def\HH{\mathbb{H}}
\def\ds{\longrightarrow}
\def\LL{\mathbb{L}}
\def\dd#1#2{\frac{\partial #1}{\partial #2}}
\def\dsp{\displaystyle}
\def\CC{{\mathcal C}}
\newcommand{\vect}{{\wedge}}
\newcommand{\cart}{{\times}}

\numberwithin{equation}{section}

\title
{Weak solutions to the Landau-Lifshitz-Maxwell system 
with nonlinear Neumann boundary conditions
arising from surface energies}
\author{Gilles Carbou\thanks{Laboratoire de Math\'ematiques et de leurs Applications de Pau, CNRS UMR 5142, Universit\'e de Pau et
    des Pays de l'Adour \texttt{gilles.carbou@univ-pau.fr}},  
Pierre Fabrie\thanks{Institut de Math\'ematiques de Bordeaux, CNRS UMR
  5251, Institut Polytechnique de Bordeaux
  \texttt{Pierre.Fabrie@math.u-bordeaux1.fr}},
 and Kévin Santugini\thanks{Institut Mathématiques de
Bordeaux, CNRS UMR5251,  MC2, INRIA Bordeaux - Sud-Ouest
\texttt{Kevin.Santugini@math.u-bordeaux1.fr}}}
\date{\today}

\newtheorem{theorem}{Theorem}
\newtheorem{lemma}[theorem]{Lemma}

\theoremstyle{definition}
\newtheorem{definition}[theorem]{Definition}

\theoremstyle{remark}

\begin{document}
\maketitle
\begin{abstract}
We study the Landau-Lifshitz system 
associated with Maxwell equations in a bilayered ferromagnetic body when
super-exchange and surface anisotropy interactions are present in the
spacer in-between the layers. In the presence of these surface
energies, the Neumann boundary condition becomes nonlinear.
We prove, in three dimensions, the existence of global weak solutions to the
Landau-Lifshitz-Maxwell system with nonlinear Neumann boundary conditions.
\end{abstract}

\section{Introduction}
Ferromagnetic materials are widely used in the industrial
world. Their four main applications are data storage (hard drives), furtivity,
communications (wave circulator), and energy (tranformers). For an
introduction to ferromagnetism, see~Aharoni\cite{Aharoni:introduc} or 
Brown\cite{Brown:microm}. 

The state of a ferromagnetic body is characterized by its
magnetization $\bm{m}$, a vector field whose norm is equal to $1$
inside the ferromagnetic body and null outside. The evolution of
$\bm{m}$ can be modeled by the Landau-Lifshitz equation
\begin{equation*}
\frac{\partial\bm{m}}{\partial t}=
-\bm{m}\vect\bm{h}_{\mathrm{tot}}
-\alpha\bm{m}\vect(\bm{m}\vect\bm{h}_{\mathrm{tot}}),
\end{equation*}
where $\bm{h}_{\mathrm{tot}}$ depends on $\bm{m}$ and contains various
contributions. In particular, in this paper, $\bm{h}_{\mathrm{tot}}$
includes various volumic and surfacic energies, among which 
the solution to Maxwell equations and several surfacic terms such as 
super-exchange and surface anisotropy. 

F.~Alouges and A.~Soyeur\cite{Alouges.Soyeur:OnGlobal} established 
the existence and the non-uniqueness of weak solutions to the Landau-Lifshitz system when only
exchange is present, \textit{i.e.} when
$\bm{h}_{\mathrm{tot}}=\Lapl\bm{m}$, see also A.Visintin~\cite{Visintin:Landau}. 
S.~Labbé \cite[Ch. 10]{labbe:mumag} extended the existence result  in the presence of the 
magnetostatic field. In the absence of the exchange interaction,
J.L.~Joly, G.~Métivier and J.~Rauch 
obtain global existence and uniqueness results in~\cite{Joly:global}.
G. Carbou and P. Fabrie \cite{Carbou.Fabrie:Time} proved
the existence of weak solutions when the Landau-Lifshitz equation is 
associated with Maxwell equations. 
K.~Santugini proved in~\cite{Santugini:SolutionsLL}, see
also~\cite[chap. 6]{Santugini:These}, the existence of
weak solutions globally in
time to the magnetostatic Landau-Lifshitz system in the presence of surface energies
that cause the Neumann boundary conditions to become nonlinear.
In this paper, we prove the existence of weak solutions to the full
Landau-Lifshitz-Maxwell system with the nonlinear Neumann boundary
conditions arising from the super-exchange and the surface anisotropy energies. In addition, we address  the long time behavior  by describing the $\omega$-limit set of the trajectories.

\vspace{3mm}

The plan of the paper is the following. In \S\ref{sect:notations}, we introduce several notations we use
throughout this paper. In \S\ref{sect:MicromagneticModel}, we recall the micromagnetic
model.  In \S\ref{sect:LandauLifshitzSystem}, we state our main
theorems. 
Theorem~\ref{theo:ExistenceWeakLandauLifshitzMaxwellSurfaceEnergies}
 states the global existence in time of weak solutions to the
Landau-Lifshitz system with the nonlinear Neumann Boundary conditions
arising from the super-exchange and the surface anisotropy energies. Theorem \ref{theo-omegalim} describes the $\omega$-limit set of a solution given by the previous theorem.
In \S\ref{sect:prerequisites}, before starting the proofs, we recall technical results on Sobolev
Spaces we use in this paper. We prove Theorem~\ref{theo:ExistenceWeakLandauLifshitzMaxwellSurfaceEnergies}
 in \S\ref{sect:proofExistenceWeakLLMSurfaceEnergies} and Theorem~\ref{theo-omegalim} in \S\ref{section-omegalimit}.

\vspace{2mm}

{\bf Notation} Throughout the paper, $\lVert\cdot\rVert$ denotes the euclidean
norm over $\mathbb{R}^d$ where $d$ is a positive integer, often equal
to $3$. When refering to the $\mathrm{L}^2$ norm over a measurable set
$A$, we use instead the $\lVert\cdot\rVert_{\mathrm{L}^2(A)}$ notation.

\section{Geometry of spacers and related notations}\label{sect:notations}

In this paper, we consider a ferromagnetic domain with spacer. We denote by 
$\Omega=\base\cart\Interv{}$ this domain, 
where $\base$ is a bounded domain of $\mathbb{R}^2$ with smooth
boundary and $\Interv{}$ is the interval $\rbrack-\hauteursous,\hauteursur\lbrack\setminus\{0\}$.
We set $Q_T= \rbrack0,T\lbrack\cart\Omega$ where $\hauteursur$ and $\hauteursous$
are two positive real numbers.

\begin{center}
\begin{tikzpicture}
\filldraw[fill=blue!50] (0,1.6)--(4,1.6)--(4,3.0)--(0,3.0)--cycle;

\filldraw[fill=blue!50] (0,0)--(4,0)--(4,1.5)--(0,1.5)--cycle;

\filldraw[fill=blue!50] (4,1.6)--(5.5,3.6)--(5.5,5.0)--(4,3.0)--cycle;
\filldraw[fill=blue!50] (4,0)--(5.5,2.0)--(5.5,3.5)--(4,1.5)--cycle;

\begin{scope}
\def\rectanglepath{(0,3.0)--(4,3.0)--(5.5,5.0)--(2,5.0)--cycle}
\filldraw[fill=blue!50] \rectanglepath;
\clip\rectanglepath;

\end{scope}

\end{tikzpicture}
\end{center}

On the common boundary 
$\Gamma=\base\cart\{0\}$ (the spacer), $\gamma^+$
is the trace map from above that sends the restriction $\bm{m}_{\vert\base\cart\rbrack0,\hauteursur\lbrack}$
to $\gamma\bm{m}$ on $\Gamma$, and 
$\gamma^-$ is the trace map from below that sends the restriction $\bm{m}_{\vert\base\cart\rbrack-\hauteursous,0\lbrack}$
to $\gamma\bm{m}$ on $\Gamma$. To simplify notations, we consider $\Gamma$
has two sides: $\Gamma^+=\base\cart\{0^+\}$ and $\Gamma^-=\base\cart\{0^-\}$.
By $\Gamma^\pm$, we denote the union of these two sides $\Gamma^+\cup\Gamma^-$. In this
paper, integrating over $\Gamma^\pm$ means integrating over both
sides, while integrating over $\Gamma$ means integrating only once.
On $\Gamma^\pm$, $\gamma$ is the map that sends $\bm{m}$
to its trace on both sides.  The trace map $\gamma^{*}$ is the trace
map that exchange the two sides of $\Gamma$: it maps $\bm{m}$
to $\gamma(\bm{m}\circ s)$ where $s$ 
is the application that sends 
$(x,y,z,t)$ to $(x,y,-z,t)$.

For convenience, we denote by $\bm{\nu}$ the extension to $\Omega$ of
the unitary exterior normal defined
on $\Gamma^\pm$, thus $\bm{\nu}(\bm{x})=-\bm{e}_z$ 
if $z>0$ or if $\bm{x}$ belongs to $\Gamma^+$, 
and $\bm{\nu}(\bm{x})=\bm{e}_z$ if $z<0$ or 
if $\bm{x}$ belongs to $\Gamma^-$.

In this paper, $\mathbb{H}^1(\Omega)$ denotes
$\mathrm{H}^1(\Omega;\mathbb{R}^3)$,
and $\mathbb{L}^2(\Omega)$ denotes
$L^2(\Omega;\mathbb{R}^3)$. 
By $\mathcal{C}^\infty_c(\Omega)$, we denote the set of
$\mathcal{C}^\infty$ functions that have compact support in 
$\Omega$. By $\mathcal{C}^\infty_c(\lbrack0,T\rbrack\cart\Omega)$,
we denote the set of
$\mathcal{C}^\infty$ functions that have compact support in 
$\lbrack0,T\rbrack\cart \Omega$.

\section{The micromagnetic model}\label{sect:MicromagneticModel}

One possible model of ferromagnetism is the micromagnetic model
introduced by W.F~Brown\cite{Brown:microm}. In the micromagnetic model, the
magnetization $\bm{M}$ is the mean at the mesoscopic scale of
the microscopic magnetization and has constant norm $M_s$ in the
ferromagnetic material and is null outside. In this
paper, we only work with the dimensionless magnetization
$\bm{m}=\bm{M}/M_s$.

To each  interaction $p$ present in the ferromagnetic material is
associated an energy $\mathrm{E}_p(\bm{m})$ and an operator
$\mathcal{H}_p$ linked by
\begin{equation*}
\uD\mathrm{E}_p(\bm{m})\cdot\bm{v}=-\int_{\Omega}\mathcal{H}_p(\bm{m}) (\bm{x})\cdot\bm{v}(\bm{x})\ud\bm{x}
\end{equation*}
The vector field $\bm{h}_p=\mathcal{H}_p(\bm{m})$ is the magnetic
effective field associated to interaction $p$. The total energy is the sum
of all the energies associated with every interaction.

These energies completely characterize the stationary problem:
the steady states of the magnetization are the
minimizers of the total energy under the constraint
$\lVert\bm{m}\rVert=1$.

To have an evolution problem, a phenomenological partial differential
equation was introduced in Landau-Lifshitz~\cite{L.L.:Phys}, the Landau-Lifshitz
equation:
\begin{equation*}
\frac{\partial\bm{m}}{\partial t}=-\bm{m}\vect\bm{h}_{\mathrm{tot}}
-\alpha\bm{m}\vect(\bm{m}\vect\bm{h}_{\mathrm{tot}}).
\end{equation*}
where $\bm{h}_{\mathrm{tot}}$ contains all the contributions to the
magnetic effective field. These contributions can either be volumic or
surfacic in nature.

\subsection{Volume energies}
\subsubsection{Exchange}
Exchange is essential in the micromagnetic theory. Without exchange,
there would be no ferromagnetic materials. This
interaction aligns the magnetization over short distances.
In the isotrope and homogenous case,
the exchange energy may be modeled by the following energy
\begin{equation*}
\mathrm{E}_{e}(\bm{m})=\frac{A}{2}\int_\Omega\lVert\nabla\bm{m}\rVert^2\ud\bm{x}.
\end{equation*}
The associated exchange operator is $\mathcal{H}_e(\bm{m})=-A\Lapl\bm{m}$.

\subsubsection{Anisotropy}
Many ferromagnetic materials have a crystalline structure. This
crystalline structure can penalize some directions of magnetization
and favor others. Anisotropy can be modeled by 
\begin{equation*}
\mathrm{E}_{a}(\bm{m})=\frac{1}{2}\int_\Omega(\mathbf{K}(\bm{x})\bm{m}(\bm{x}))\cdot\bm{m}(\bm{x})\ud\bm{x}.
\end{equation*}
where $\mathbf{K}$ is a positive symmetric matrix field.
The associated anisotropy operator is $\mathcal{H}_a(\bm{m})=-\mathbf{K}\bm{m}$.

\subsubsection{Maxwell}
This is the magnetic interaction that comes from Maxwell
equations. The constitutive relations in the ferromagnetic medium are given by:
\begin{equation*}\left\{
\begin{array}{l}
B=\mu_0(\bm{h}+\overline{\bm{m}}),\\
D=\varepsilon_0 \bm{e},
\end{array}
\right.
\end{equation*}
where $\overline{\bm{m}}$ is the extension of $\bm{m}$ by zero outside $\Omega$.

Starting from the Maxwell equations, the magnetic excitation $\bm{h}$ and the electric field
$\bm{e}$ are solutions to the following system:
\begin{align*}
\mu_0\frac{\partial(\bm{h}+\overline{\bm{m}})}{\partial t}+\Rot\bm{e}&=0,\\
\mu_0\frac{\partial\bm{e}}{\partial
  t}+\sigma(\bm{e}+\bm{f})\mathds{1}_\Omega-\Rot\bm{h}&=0.
\end{align*}
As these are evolution equations, initial conditions are needed to
complete the system. 
The energy associated with the Maxwell
interaction is 
\begin{equation*}
E_{\textrm{maxw}}(\bm{h},\bm{e})=\frac{1}{2}\lVert\bm{h}\rVert^2_{\mathrm{L}^2(\mathbb{R}^3)}
+\frac{\varepsilon_0}{2\mu_0}\lVert\bm{e}\rVert^2_{\mathrm{L}^2(\mathbb{R}^3)}.
\end{equation*}

We recall the Law of Faraday:
$\div B=0.$
Here, the constitutive relation reads $B=\mu_0(\bm{h}+\overline{\bm{m}})$. Therefore, in order to satisfy the law of Faraday, we must assume that it is satisfied at initial time. For positive times, by taking the divergence of the first Maxwell's equation, we remark that the divergence free condition is propagated by the system.

\subsection{Surface energies}
When a spacer is present inside a ferromagnetic material, new physical
phenomena may appear in the spacer. These phenomena are modeled 
by surface energies, see M.~Labrune and J.~Miltat~\cite{Labrune.Miltat:wall.structure}.

\subsubsection{Super-exchange}
This surface energy penalizes the jump of the magnetization across the
spacer. It is modeled by a quadratic and a biquadratic term:
\begin{equation}\label{eq:SuperExchangeEnergy}
\mathrm{E}_{se}(\bm{m})=
\frac{J_1}{2}\int_\Gamma\lVert\gamma^+\bm{m}-\gamma^-\bm{m}\rVert^2\ud S(\hat{\bm{x}})
+J_2\int_\Gamma\lVert\gamma^+\bm{m}\vect\gamma^-\bm{m}\rVert^2\ud S(\hat{\bm{x}}).
\end{equation}
The magnetic excitation associated with super-exchange is:
\begin{equation*}
\mathcal{H}_{se}(\bm{m})=\Big(J_1(\gamma^{*}\bm{m}-\gamma\bm{m})
+2J_2\big((\gamma\bm{m}\cdot\gamma^{*}\bm{m})\gamma^{*}\bm{m}
-\lVert\gamma^*\bm{m}\rVert^2\gamma\bm{m}\big)\Big)
\ud S(\Gamma^+\cup\Gamma^-),
\end{equation*}
where $\gamma^*$ is defined in~\S3.
Integration over $\ud S(\Gamma^+\cup\Gamma^-)$ should be understood as
integrating over both faces of the surface $\Gamma$.

\subsubsection{Surface anisotropy}
Surface anisotropy penalizes magnetization that is orthogonal on the
boundary. In the micromagnetic model, it is modeled by a surface energy:
\begin{equation}\label{eq:SurfaceAnisotropyEnergy}
\begin{split}
\mathrm{E}_{sa}(\bm{m})&=\frac{K_s}{2}\int_{\Gamma^+}\lVert\gamma\bm{m}\vect\bm{\nu}\rVert^2\ud S(\hat{\bm{x}})
+\frac{K_s}{2}\int_{\Gamma^-}\lVert\gamma\bm{m}\vect\bm{\nu}\rVert^2\ud S(\hat{\bm{x}})\\
&=\frac{K_s}{2}\int_{\Gamma^\pm}\lVert\gamma\bm{m}\vect\bm{\nu}\rVert^2\ud S(\hat{\bm{x}}).
\end{split}
\end{equation}
The magnetic excitation associated with surface anisotropy is:
\begin{equation*}
\mathcal{H}_{sa}(\bm{m})=K_s\big((\gamma\bm{m}\cdot\bm{\nu})\bm{\nu}-\gamma\bm{m}\big)
\ud S(\Gamma^+\cup\Gamma^-).
\end{equation*}

\subsubsection{New boundary conditions}
Without surface energies, the standard boundary condition is the
homogenous Neumann condition. When surface energies are present, 
the boundary conditions are the ones arising from the stationarity conditions
on the total magnetic energy:
\begin{equation*}
A\gamma\bm{m}\vect\frac{\partial\bm{m}}{\partial\bm{\nu}}=
K_s(\bm{\nu}\cdot\gamma\bm{m})\gamma\bm{m}\vect\bm{\nu}
+J_1\gamma\bm{m}\vect\gamma^{*}\bm{m}
+2J_2(\gamma\bm{m}\cdot\gamma^{*}\bm{m})\gamma\bm{m}\vect\gamma^{*}\bm{m}
\end{equation*}
on the interface $\Gamma^\pm$. A more convincing justification for
these boundary conditions is that they are the ones needed to recover
formally the energy inequality. These boundary conditions are
nonlinear.

\section{The Landau-Lifshitz system}\label{sect:LandauLifshitzSystem}
We consider the following Landau-Lifshitz-Maxwell system:
\begin{subequations}
\begin{align}
\frac{\partial\bm{m}}{\partial t}&=
-\bm{m}\vect\bm{h}_{\mathrm{tot}}^{\mathrm{vol}}-\alpha\bm{m}\vect(\bm{m}\vect\bm{h}_{\mathrm{tot}}^{\mathrm{vol}})\mbox{ in }\mathbb{R}^+\times \Omega,\label{41a}\\
\bm{m}(0,\cdot)&=\bm{m}_0\mbox{ on } \Omega,\\
\lVert\bm{m}\rVert&=1\mbox{ in }\mathbb{R}^+\times \Omega,\\
\frac{\partial\bm{m}}{\partial\bm{\nu}}&= 
0\qquad\text{on $\partial\Omega\setminus\Gamma^\pm$},\\
\begin{split}
\frac{\partial\bm{m}}{\partial\bm{\nu}}&=
\frac{Ks}{A}(\bm{\nu}\cdot\gamma\bm{m})
		(\bm{\nu}-(\bm{\nu}\cdot\gamma\bm{m})\gamma\bm{m})
\\&\phantom{=}
	+\frac{J_1}{A}(\gamma^{*}\bm{m}-
		(\gamma\bm{m}\cdot\gamma^{*}\bm{m})
		\gamma\bm{m})				
\\&\phantom{=}
 	+2\frac{J_2}{A}(\gamma\bm{m}\cdot\gamma^{*}\bm{m})
		(\gamma^{*}\bm{m}-
		(\gamma\bm{m}\cdot\gamma^{*}\bm{m})
		\gamma\bm{m})
		 \qquad\text{on $\mathbb{R}\times\Gamma^\pm$},
\end{split}
\end{align}
\end{subequations}
where $\bm{h}_{\mathrm{tot}}^{\mathrm{vol}}=\bm{h}-\mathbf{K}\bm{m}+A\Lapl\bm{m}$
and $(\bm{e},\bm{h})$ is solution to Maxwell equations:
\begin{subequations}
\begin{align}
\mu_0\frac{\partial(\overline{\bm{m}}+\bm{h})}{\partial t}+\Rot\bm{e}&=0\mbox{ in }\mathbb{R}^+\times \mathbb{R}^3,\\
\varepsilon_0\frac{\partial\bm{e}}{\partial t}+\sigma(\bm{e}+\bm{f})\mathds{1}_\Omega-\Rot\bm{h}&=0\mbox{ in }\mathbb{R}^+\times \mathbb{R}^3,\\
\bm{e}(0,\cdot)&=\bm{e}_0\mbox{ in }\mathbb{R}^3,\\
\bm{h}(0,\cdot)&=\bm{h}_0\mbox{ in } \mathbb{R}^3.
\end{align}
\end{subequations}

We first begin by defining the concept of weak solution to the
Landau-Lifshitz-Maxwell system with surface energies. This 
concept of weak solutions is present 
in~\cite{Alouges.Soyeur:OnGlobal, Carbou.Fabrie:Time, labbe:mumag,
  Santugini:SolutionsLL}. The key point is that the Landau-Lifschitz equation (\ref{41a}) is formally equivalent to the following Landau-Lifschitz-Gilberg equation: 
  
  $$\frac{\partial\bm{m}}{\partial t}-\alpha \bm{m} \vect \frac{\partial\bm{m}}{\partial t}=-(1+\alpha^2)\bm{m}\vect\bm{h}_{\mathrm{tot}}^{\mathrm{vol}},$$
  which is more convenient to obtain the weak formulation defined by:

\begin{definition}[Weak solutions to Landau-Lifshitz-Maxwell with
  surface energies]\label{defin:LLMaxwellWeak}
\begin{subequations} 
Functions $\bm{m}$ in $\mathrm{L}^\infty(0,+\infty;\mathbb{H}^1(\Omega))$
and in $\mathrm{H}^1_{loc}([0,+\infty\lbrack;\mathbb{L}^2(\Omega))$ with
$\frac{\partial\bm{m}}{\partial t}$ in $ \mathbb{L}^2(\mathbb{R}^+\cart\Omega)$,
$\bm{e}$ in 
$\mathrm{L}^\infty(\mathbb{R}^+;\mathbb{L}^2(\mathbb{R}^3))$,
and $\bm{h}$ 
in $\mathrm{L}^\infty(\mathbb{R}^+;\mathbb{L}^2(\mathbb{R}^3))$ 
are said to be weak solutions to the Landau-Lifshitz Maxwell 
system with surface energies if
\begin{enumerate}
\item 
$\lVert \bm{m}\rVert=1$ almost 
everywhere in $\rbrack0,T\lbrack\cart\Omega$.
\item For all $T>0$ and $\bm{\phi}$ in $\mathbb{H}^1(\rbrack0,T\lbrack\cart\Omega)$, 
\begin{equation}\label{eq:WeakFormulationMagnetization}%\label{eq:LLWeakSolDual}.
\begin{split}
&\phantom{=}
\iint_{Q_T}
\frac{\partial\bm{m}}{\partial t}\cdot\bm{\phi}\ud\bm{x}\ud t
-\alpha\iint_{Q_T}
	\left(\bm{m}(t,\bm{x})\vect\frac{\partial\bm{m}}{\partial t}(t,\bm{x})\!\right)
	\cdot\bm{\phi}(t,\bm{x})\ud\bm{x}\ud t 
\\&=
(1+\alpha^2) A\iint_{Q_T}\sum_{i=1}^3
	\left(\bm{m}(t,\bm{x})\vect\frac{\partial \bm{m}}{\partial x_i}(t,\bm{x})\right)
	\cdot\frac{\partial\bm{\phi}}{\partial x_i}(t,\bm{x})\ud\bm{x}\ud t
 \\&\phantom{=}
+(1+\alpha^2)\iint_{Q_T}
	\left(\bm{m}(t,\bm{x})\vect\mathbf{K}(\bm{x})\bm{m}(t,\bm{x})\right)		
	\cdot\bm{\phi}(t,\bm{x})\ud\bm{x}\ud t
\\&\phantom{=}
-(1+\alpha^2)\iint_{Q_T}
	\left(\bm{m}(t,\bm{x})\vect\bm{h}(t,\bm{x})\right)		
	\cdot\bm{\phi}(t,\bm{x})\ud\bm{x}\ud t					
\\&\phantom{=}
-(1+\alpha^2)K_s\iint_{\rbrack0,T\lbrack \cart\Gamma^\pm}
	(\bm{\nu}\cdot\gamma\bm{m})(\gamma\bm{m}\vect\bm{\nu})
	\cdot\gamma\bm{\phi}\ud S(\hat{\bm{x}})\ud t					
\\&\phantom{=}
-(1+\alpha^2)J_1\iint_{\rbrack0,T\lbrack\cart\Gamma^\pm}
	(\gamma\bm{m}\vect\gamma^{*}\bm{m})\cdot\gamma\bm{\phi}\ud S(\hat{\bm{x}})\ud t		
\\&\phantom{=}
	-2(1+\alpha^2)J_2
		\iint_{\rbrack0,T\lbrack\cart\Gamma^\pm}
	(\gamma\bm{m}\cdot\gamma^{*}\bm{m})(\gamma\bm{m}\vect\gamma^{*}\bm{m})
	\cdot\gamma\bm{\phi}\ud S(\hat{\bm{x}})\ud t.
\end{split}
\end{equation}
\item In the sense of traces,
$\bm{m}(0,\cdot)=\bm{m}_0$.
\item For all $\bm{\psi}$ in $\mathcal{C}^\infty_c(\lbrack0,+\infty\lbrack,\mathbb{R}^3)$:
\begin{multline}\label{eq:WeakFormulationExcitation}
-\mu_0\iint_{\mathbb{R}^+\cart\mathbb{R}^3}(\bm{h}+\bm{m})\cdot
\frac{\partial\bm{\psi}}{\partial t}\ud\bm{x}\ud t
+\iint_{\mathbb{R}^+\cart\mathbb{R}^3}\bm{e}\cdot\Rot\bm{\psi}\ud\bm{x}\ud t
=\\=
\mu_0\int_{\mathbb{R}^3}(\bm{h}_0+\bm{m}_0)\cdot\bm{\psi}_0 \ud\bm{x}
\end{multline}
\item For all $\bm{\Theta}$ in $\mathcal{C}^\infty_c(\lbrack0,+\infty\lbrack\cart\mathbb{R}^3)$:
\begin{multline}\label{eq:WeakFormulationElectric}
-\varepsilon_0\iint_{\mathbb{R}^+\cart\mathbb{R}^3}\bm{e}\cdot
\frac{\partial\bm{\Theta}}{\partial t}\ud\bm{x}\ud t
-\iint_{\mathbb{R}^+\cart\mathbb{R}^3}\bm{h}\cdot\Rot\bm{\Theta}\ud\bm{x}\ud t
+\sigma\iint_{\mathbb{R}^+\cart\Omega}(\bm{e}+\bm{f})
\cdot\bm{\Theta}
\ud\bm{x}\ud t
=\\=
\varepsilon_0\int_{\mathbb{R}^3}\bm{e}_0\cdot\bm{\Theta}_0 \ud\bm{x}.
\end{multline}
\item The following energy inequality holds
\begin{equation}\label{eq:EnergyInequality}
\begin{split}
\mathrm{E}(\bm{m}(T),\bm{h}(T),\bm{e}(T))+\frac{\alpha}{1+\alpha^2}\iint_{Q_T}
\left\vert \frac{\partial \bm{m}}{\partial t}\right\vert^2 \ud\bm{x}
\ud t 
&\\
+\frac{\sigma}{\mu_0}\int_0^T\lVert\bm{e}\rVert^2_{\mathbb{L}^2(\Omega)}\ud t+ 
\frac{\sigma}{\mu_0} \iint_{Q_T} \bm{e}\cdot\bm{f}\ud\bm{x} \ud t 
&\leq \mathrm{E}(\bm{m}_0,\bm{h}_0,\bm{e}_0),
\end{split}
\end{equation}
where 
\begin{equation*}
\begin{split}
\mathrm{E}(\bm{m},\bm{h},\bm{e})&=\frac{A}{2}\int_\Omega\lVert\nabla\bm{m}\rVert^2\ud\bm{x}
+\frac{1}{2}\int_\Omega(\mathbf{K}(\bm{x})\bm{m}(\bm{x}))\cdot\bm{m}(\bm{x})\ud\bm{x}\\
&\phantom{=}+\frac{\varepsilon_0}{2\mu_0}\int_{\mathbb{R}^3}\lVert\bm{e}(\bm{x})\rVert^2
+\frac{1}{2}\int_{\mathbb{R}^3}\lVert\bm{h}(\bm{x})\rVert^2
+\frac{K_s}{2}\int_{\Gamma^+\cup\Gamma^-}\lVert\gamma^+\bm{m}\vect\bm{\nu}\rVert^2\ud S(\bm{x})
\\
&\phantom{=}
+\frac{J_1}{2}\int_\Gamma\lVert\gamma^+\bm{m}-\gamma^-\bm{m}\rVert^2\ud\bm{x}
+J_2\int_\Gamma\lVert\gamma^+\bm{m}\vect\gamma^-\bm{m}\rVert^2\ud\bm{x}.
\end{split}
\end{equation*}
\end{enumerate}
\end{subequations}
\end{definition}

Our first result states the existence of a global in time weak solution to the Laudau-Lifschitz-Maxwell system .

\begin{theorem}\label{theo:ExistenceWeakLandauLifshitzMaxwellSurfaceEnergies}
Let $\bm{m}_0$ be in $\mathbb{H}^1(\Omega)$ such that
$\lVert\bm{m}_0\rVert=1$ almost everywhere in $\Omega$. Let
$\bm{h}_0$ and $\bm{e}_0$ be in $\mathbb{L}^2(\Omega)$. Let $\bm{f}$
be in $\mathbb{L}^2(\mathbb{R}^+\cart\Omega)$
Suppose $\Div(\bm{h}_0+\overline{\bm{m}_0})=0$ in $\mathbb{R}^3$, where
$\overline{\bm{m}_0}$ is the extension of $\bm{m}_0$ by $0$ outside $\Omega$.
Then, there exists at least one weak solution to
the Landau-Lifshitz-Maxwell system in the sense of
Definition~\ref{defin:LLMaxwellWeak}. 
\end{theorem}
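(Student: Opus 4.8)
The plan is to construct weak solutions by a Galerkin-type penalization/regularization scheme, following the strategy combining Alouges--Soyeur for the Landau-Lifshitz part, Carbou--Fabrie for the Maxwell coupling, and Santugini for the nonlinear surface terms. First I would replace the saturation constraint $\lVert\bm{m}\rVert=1$ and the gyroscopic nonlinearity by a penalized, semilinear parabolic system: for $\varepsilon>0$ and a spatial discretization parameter (finite-dimensional subspace $V_k\subset\mathbb{H}^1(\Omega)$, or alternatively a time-discretization), solve
\begin{equation*}
\frac{\partial\bm{m}_\varepsilon}{\partial t}-\alpha\bm{m}_\varepsilon\vect\frac{\partial\bm{m}_\varepsilon}{\partial t}
=-(1+\alpha^2)\bm{m}_\varepsilon\vect\Big(A\Lapl\bm{m}_\varepsilon-\mathbf{K}\bm{m}_\varepsilon+\bm{h}_\varepsilon+\frac{1}{\varepsilon}(1-\lVert\bm{m}_\varepsilon\rVert^2)\bm{m}_\varepsilon\Big),
\end{equation*}
coupled with the (linear) Maxwell system whose source is $\partial_t\overline{\bm{m}_\varepsilon}$, and with the nonlinear Neumann condition retained on $\Gamma^\pm$. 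The operator $\bm{w}\mapsto\bm{w}-\alpha\bm{m}\vect\bm{w}$ is invertible with bounds independent of $\lVert\bm{m}\rVert$, so local-in-time existence for the regularized problem follows from a fixed-point argument; Maxwell's equations are solved by standard semigroup/energy methods and produce $\bm{h}_\varepsilon,\bm{e}_\varepsilon\in\mathrm{L}^\infty(\mathbb{R}^+;\mathbb{L}^2(\mathbb{R}^3))$ controlled by the magnetization.

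The second step is to derive the \emph{a priori} estimates uniform in $\varepsilon$ (and in the discretization parameter). Testing the magnetization equation with $\partial_t\bm{m}_\varepsilon$ and using the antisymmetry of $\bm{w}\mapsto\bm{m}_\varepsilon\vect\bm{w}$ yields, together with the Maxwell energy identity obtained by testing with $(\bm{h}_\varepsilon,\bm{e}_\varepsilon)$, the regularized energy inequality
\begin{equation*}
\mathrm{E}_\varepsilon(\bm{m}_\varepsilon,\bm{h}_\varepsilon,\bm{e}_\varepsilon)(T)+\frac{\alpha}{1+\alpha^2}\iint_{Q_T}\Big\lvert\frac{\partial\bm{m}_\varepsilon}{\partial t}\Big\rvert^2+\frac{\sigma}{\mu_0}\int_0^T\lVert\bm{e}_\varepsilon\rVert_{\mathbb{L}^2(\Omega)}^2\leq \mathrm{E}_\varepsilon(\bm{m}_0,\bm{h}_0,\bm{e}_0)+\text{(source term in }\bm{f}),
\end{equation*}
where $\mathrm{E}_\varepsilon$ adds the penalization term $\tfrac{1}{4\varepsilon}\int_\Omega(1-\lVert\bm{m}_\varepsilon\rVert^2)^2$. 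Since $\lVert\bm{m}_0\rVert=1$, the initial energy is bounded independently of $\varepsilon$; this gives uniform bounds on $\bm{m}_\varepsilon$ in $\mathrm{L}^\infty(\mathbb{R}^+;\mathbb{H}^1(\Omega))$, on $\partial_t\bm{m}_\varepsilon$ in $\mathbb{L}^2(Q_T)$, on $(\bm{h}_\varepsilon,\bm{e}_\varepsilon)$ in $\mathrm{L}^\infty(\mathbb{R}^+;\mathbb{L}^2(\mathbb{R}^3))$, and crucially $\int_\Omega(1-\lVert\bm{m}_\varepsilon\rVert^2)^2\leq C\varepsilon\to0$. Here one must check the surface energy terms stay controlled: by the trace theorem $\mathbb{H}^1(\Omega)\hookrightarrow\mathrm{L}^4(\Gamma^\pm)$, so the quadratic and biquadratic boundary contributions are bounded by the $\mathbb{H}^1$ norm, and in fact the surface energies are nonnegative so they help rather than hurt in the energy balance.

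The third step is passage to the limit. By Aubin--Lions (using $\mathbb{H}^1(\Omega)\hookrightarrow\hookrightarrow\mathbb{L}^2(\Omega)$ and the bound on $\partial_t\bm{m}_\varepsilon$) extract a subsequence with $\bm{m}_\varepsilon\to\bm{m}$ strongly in $\mathrm{L}^2_{loc}(\mathbb{R}^+;\mathbb{L}^2(\Omega))$ and a.e., weakly-$*$ in $\mathrm{L}^\infty(\mathbb{R}^+;\mathbb{H}^1(\Omega))$, with $\partial_t\bm{m}_\varepsilon\rightharpoonup\partial_t\bm{m}$ weakly in $\mathbb{L}^2$; the a.e. convergence plus $\int(1-\lVert\bm{m}_\varepsilon\rVert^2)^2\to0$ gives $\lVert\bm{m}\rVert=1$ a.e. For Maxwell, $(\bm{h}_\varepsilon,\bm{e}_\varepsilon)\rightharpoonup(\bm{h},\bm{e})$ weakly-$*$ and the linear equations pass to the limit directly; the coupling term $\bm{m}_\varepsilon\vect\bm{h}_\varepsilon$ converges because $\bm{m}_\varepsilon\to\bm{m}$ strongly in $\mathbb{L}^2$ and locally, paired against the weak $\mathbb{L}^2$ limit of $\bm{h}_\varepsilon$ (testing against smooth $\bm\phi$). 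The terms $\bm{m}_\varepsilon\vect\partial_{x_i}\bm{m}_\varepsilon$ converge weakly in $\mathrm{L}^1$ (product of strong $\mathbb{L}^2$ and weak $\mathbb{L}^2$), which suffices against smooth test functions; the penalization term $\tfrac1\varepsilon(1-\lVert\bm{m}_\varepsilon\rVert^2)\bm{m}_\varepsilon\vect\bm{m}_\varepsilon=\bm{0}$ disappears identically because of the cross product with $\bm{m}_\varepsilon$ — this is the standard Alouges--Soyeur trick and is why the penalized scheme works.

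The main obstacle will be the passage to the limit in the \emph{nonlinear surface terms}, i.e. showing
\begin{equation*}
\iint_{\rbrack0,T\lbrack\cart\Gamma^\pm}(\gamma\bm{m}_\varepsilon\cdot\gamma^*\bm{m}_\varepsilon)(\gamma\bm{m}_\varepsilon\vect\gamma^*\bm{m}_\varepsilon)\cdot\gamma\bm{\phi}\longrightarrow\iint_{\rbrack0,T\lbrack\cart\Gamma^\pm}(\gamma\bm{m}\cdot\gamma^*\bm{m})(\gamma\bm{m}\vect\gamma^*\bm{m})\cdot\gamma\bm{\phi},
\end{equation*}
and similarly for the $K_s$ and $J_1$ terms. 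Weak $\mathbb{H}^1(\Omega)$ convergence alone only gives weak $\mathrm{H}^{1/2}(\Gamma^\pm)$ convergence of traces, not enough for cubic nonlinearities. The remedy, as in Santugini, is to upgrade to \emph{strong} trace convergence: from $\bm{m}_\varepsilon\to\bm{m}$ strongly in $\mathrm{L}^2_{loc}(\mathbb{R}^+;\mathbb{L}^2(\Omega))$ and the uniform $\mathbb{H}^1$ bound, interpolation with the trace/compactness theorem (the embedding $\mathrm{H}^1(\Omega)\hookrightarrow\hookrightarrow\mathrm{H}^s(\partial\Omega)$ for $s<1/2$, hence compactly into $\mathrm{L}^p(\Gamma^\pm)$ for $p$ up to some exponent $>4$) yields $\gamma\bm{m}_\varepsilon\to\gamma\bm{m}$ strongly in $\mathrm{L}^4(\rbrack0,T\lbrack\cart\Gamma^\pm)$ (and likewise for $\gamma^*$); since the surface integrands are at most cubic and $\gamma\bm\phi$ is bounded, strong $\mathrm{L}^4$ convergence of the traces is exactly enough to pass to the limit. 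One then finally recovers the energy inequality \eqref{eq:EnergyInequality} by lower semicontinuity of each energy term under the relevant weak convergences (weak $\mathbb{H}^1$ for the exchange term, weak $\mathbb{L}^2$ for the Maxwell terms, strong $\mathrm{L}^4(\Gamma^\pm)$ for the surface terms, and dropping the nonnegative penalization term), together with the weak lower semicontinuity of $\iint|\partial_t\bm{m}_\varepsilon|^2$; the initial condition $\bm{m}(0,\cdot)=\bm{m}_0$ survives because $\bm{m}\in\mathrm{H}^1_{loc}([0,\infty\lbrack;\mathbb{L}^2(\Omega))$ makes the trace at $t=0$ well-defined and the convergence is compatible with it.
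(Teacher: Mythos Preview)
Your penalized equation is set up incorrectly, and this breaks both the a~priori estimate and the recovery of the saturation constraint. With the factor $\bm{m}_\varepsilon\vect(\cdots)$ already on the right-hand side, the penalization term $\tfrac{1}{\varepsilon}(1-\lVert\bm{m}_\varepsilon\rVert^2)\bm{m}_\varepsilon$ contributes identically zero, so nothing in your scheme forces $\lVert\bm{m}_\varepsilon\rVert\to1$; and testing that equation with $\partial_t\bm{m}_\varepsilon$ does \emph{not} produce $\tfrac{d}{dt}\tfrac{A}{2}\lVert\nabla\bm{m}_\varepsilon\rVert^2$ from $(\bm{m}_\varepsilon\vect A\Lapl\bm{m}_\varepsilon)\cdot\partial_t\bm{m}_\varepsilon$, so the energy inequality you state does not follow. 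In the Alouges--Soyeur scheme (and in the paper) the penalized equation is written \emph{without} the cross product on the right,
\[
\alpha\,\partial_t\bm{m}_{k}+\bm{m}_{k}\vect\partial_t\bm{m}_{k}
=(1+\alpha^2)\bigl(A\Lapl\bm{m}_{k}-\mathbf{K}\bm{m}_{k}+\bm{h}_{k}+\cdots\bigr)
-k(1+\alpha^2)(\lVert\bm{m}_{k}\rVert^2-1)\bm{m}_{k},
\]
so that testing with $\partial_t\bm{m}_k$ yields the full energy, including $\tfrac{k}{4}\int(\lVert\bm{m}_k\rVert^2-1)^2$. Only \emph{afterwards}, when passing to the limit $k\to\infty$, does one choose the test function $\bm{m}_k\vect\bm{\varphi}$, and it is at that stage that the penalization drops out. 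You have merged these two distinct steps into one equation, and the result does neither job.

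There is also a structural difference with the paper that you gloss over. The paper does not keep the nonlinear Neumann condition at the approximate level: it replaces the surface energies by a volumic approximation $\mathrm{E}_s^\eta$ supported in a thin layer of width $2\eta$ around $\Gamma$, imposes \emph{homogeneous} Neumann conditions (so the Galerkin basis of Neumann eigenfunctions is compatible and no boundary terms appear in the integration by parts), and recovers the surface integrals only in a final limit $\eta\to0$, after $\lVert\bm{m}_\eta\rVert=1$ has already been obtained. Retaining the nonlinear boundary condition on a finite-dimensional Galerkin space is not meaningful as stated, and even in a weak formulation with surface integrals one must check that the surface operators and the energy identity behave correctly while $\lVert\bm{m}_\varepsilon\rVert\neq1$. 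Your trace-compactness argument for the passage to the limit in the cubic boundary terms is correct in spirit, but it belongs at the last step of the construction, not embedded in an approximate problem whose well-posedness you have not secured.
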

Uniqueness is unlikely as the solution isn't unique when only the
exchange energy is present, see~\cite{Alouges.Soyeur:OnGlobal}.

\vspace{3mm}
In our second result we characterize the $\omega$-limit set of a trajectory. The definition is the following:

\begin{definition}
\label{defomega}
Let $(\bm{m},\bm{h},\bm{e})$ be a weak solution of the Landau-Lifschitz-Maxwell system given by Theorem ~\ref{theo:ExistenceWeakLandauLifshitzMaxwellSurfaceEnergies}. We call $\omega$-limit set of this trajectory the set:
\begin{equation*}\omega(\bm{m})=\left\{ v\in H^1(\Omega), \exists (t_n)_n, \; \lim_{n\rightarrow +\infty} t_n=+\infty, \; \bm{m}(t_n,.)\rightharpoonup v \mbox{ weakly  in } H^1(\Omega)\right\}.\end{equation*}
\end{definition}

We remark that $m\in L^\infty(0,+\infty;\mathbb{H}^1(\Omega))$ so that $\omega(m) $ is non empty. 

\vspace{2mm}
\begin{theorem}
\label{theo-omegalim}
Let $(\bm{m},\bm{e},\bm{h})$ be a weak solution of the Landau-Lifschitz-Maxwell system given by Theorem ~\ref{theo:ExistenceWeakLandauLifshitzMaxwellSurfaceEnergies}. Let $\bm u\in \omega(\bm m)$. Then $\bm u$ satisfies:
\begin{enumerate}
\item $\bm{u}\in \mathbb{H}^1(\Omega)$, $\vert \bm{u}\vert =1$ almost everywhere,
\item for all $\bm{\varphi}\in \mathbb{H}^1(\Omega)$,
\begin{equation}\label{eq:WeakFormulationMagnetizationOmega}%\label{eq:LLWeakSolDual}.
\begin{split}
0&=
 A\int_{\Omega}\sum_{i=1}^3
	\left(\bm{u}(\bm{x})\vect\frac{\partial \bm{u}}{\partial x_i}(\bm{x})\right)
	\cdot\frac{\partial\bm{\varphi}}{\partial x_i}(t,\bm{x})\ud\bm{x}
 \\&\phantom{=}
+\int_{\Omega}\
	\left(\bm{u}(\bm{x})\vect\mathbf{K}(\bm{x})\bm{u}(\bm{x})\right)		
	\cdot\bm{\varphi}(\bm{x})\ud\bm{x}
\\&\phantom{=}
-\int_{\Omega}\
	\left(\bm{u}(\bm{x})\vect\bm{H}(\bm{x})\right)		
	\cdot\bm{\varphi}(\bm{x})\ud\bm{x}				
\\&\phantom{=}
-K_s\int_{(\Gamma^\pm)}
	(\bm{\nu}\cdot\gamma\bm{u})(\gamma\bm{u}\vect\bm{\nu})
	\cdot\gamma\bm{\varphi}\ud S(\hat{\bm{x}})				
\\&\phantom{=}
-J_1\int_{(\Gamma^\pm)}
	(\gamma\bm{u}\vect\gamma^{*}\bm{m})\cdot\gamma\bm{\varphi}\ud S(\hat{\bm{x}})		
\\&\phantom{=}
	-2J_2
		\int_{\Gamma^\pm}
	(\gamma\bm{u}\cdot\gamma^{*}\bm{u})(\gamma\bm{u}\vect\gamma^{*}\bm{u})
	\cdot\gamma\bm{\varphi}\ud S(\hat{\bm{x}}).
\end{split}
\end{equation}
\item $\bm H$ is deduced from $\bm u$ by the relations:
\begin{equation*}\div (\bm H +\overline{ \bm u})=0 \mbox{ and }\curl \bm H =0 \mbox{ in }\D'(\R^3).\end{equation*}

\end{enumerate}
\end{theorem}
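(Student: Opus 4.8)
The plan is to pass to the limit along a sequence $t_n\to+\infty$ in the weak formulation \eqref{eq:WeakFormulationMagnetization}, exploiting the energy inequality \eqref{eq:EnergyInequality} to control the relevant quantities. First I would fix $\bm u\in\omega(\bm m)$ together with a sequence $t_n\to+\infty$ such that $\bm m(t_n,\cdot)\rightharpoonup\bm u$ weakly in $\mathbb{H}^1(\Omega)$. Since $\mathbb H^1(\Omega)$ embeds compactly in $\mathbb L^2(\Omega)$ and compactly into $\mathbb L^2(\partial\Omega)$ via the trace, one gets $\bm m(t_n,\cdot)\to\bm u$ strongly in $\mathbb L^2(\Omega)$ and $\gamma\bm m(t_n,\cdot)\to\gamma\bm u$ strongly in $\mathbb L^2(\Gamma^\pm)$; in particular $\lVert\bm u\rVert=1$ a.e., which gives point (1). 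The energy inequality shows that $t\mapsto\mathrm E(\bm m(t),\bm h(t),\bm e(t))$ is bounded and that $\partial_t\bm m\in\mathbb L^2(\mathbb R^+\times\Omega)$ and $\bm e\in\mathbb L^2(\mathbb R^+\times\Omega)$; hence $\int_{t_n}^{t_n+1}\lVert\partial_t\bm m\rVert^2_{\mathbb L^2(\Omega)}\ud t\to0$ and $\int_{t_n}^{t_n+1}\lVert\bm e\rVert^2_{\mathbb L^2(\Omega)}\ud t\to0$.

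Next I would localize the weak formulation in time: for a fixed $\bm\varphi\in\mathbb H^1(\Omega)$ and a scalar cutoff $\chi\in\mathcal C^\infty_c(\rbrack0,1\lbrack)$ with $\int_0^1\chi=1$, apply \eqref{eq:WeakFormulationMagnetization} on $\rbrack0,t_n+1\lbrack$ with test function $\bm\phi(t,\bm x)=\chi(t-t_n)\bm\varphi(\bm x)$ (equivalently, subtract the formulations on $\rbrack0,t_n+1\lbrack$ and $\rbrack0,t_n\lbrack$). This yields an identity whose left-hand side is bounded by $C\lVert\partial_t\bm m\rVert_{\mathbb L^2(\rbrack t_n,t_n+1\lbrack\times\Omega)}$ (using $\lVert\bm m\rVert=1$ for the cross term), hence tends to $0$. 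On the right-hand side every term is a time average $\int_0^1\chi(s)\,G(\bm m(t_n+s,\cdot))\ud s$ of a continuous-in-$\mathbb L^2$ functional of $\bm m$ (and of $\bm h$); I would show each such average converges to $G(\bm u)$ by combining the strong $\mathbb L^2(\Omega)$ and $\mathbb L^2(\Gamma^\pm)$ convergence of $\bm m(t_n+s,\cdot)$ (uniformly in $s\in\rbrack0,1\lbrack$, which follows from the uniform $\mathbb H^1$ bound plus the $\mathbb L^2$-in-time smallness of $\partial_t\bm m$ near $t_n$) with the quadratic/cubic structure of the surface terms, and dominated convergence in $s$. The one genuinely delicate volume term is the exchange term $\int_\Omega\sum_i(\bm m\vect\partial_{x_i}\bm m)\cdot\partial_{x_i}\bm\varphi$: here I would use that $\bm m(t_n+s,\cdot)\to\bm u$ strongly in $\mathbb L^2$ and weakly in $\mathbb H^1$, so the product $\bm m(t_n+s)\vect\nabla\bm m(t_n+s)\rightharpoonup\bm u\vect\nabla\bm u$ weakly in $\mathbb L^1$ (indeed in $\mathbb L^{3/2}$, using the Sobolev embedding $\mathbb H^1\hookrightarrow\mathbb L^6$), which suffices to pass to the limit against the fixed smooth-enough factor $\nabla\bm\varphi$ after another time average. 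This delivers \eqref{eq:WeakFormulationMagnetizationOmega}, with $\bm H$ defined as the relevant weak limit of $\bm h(t_n+s,\cdot)$.

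For point (3) I would identify $\bm H$. From \eqref{eq:WeakFormulationExcitation} and the boundedness of $\bm h$, $\bm e$, $\bm m$ in the relevant $\mathrm L^\infty_t\mathbb L^2$ spaces, the time-averaged fields $\bm h^{(n)}(s,\cdot):=\bm h(t_n+s,\cdot)$ are bounded in $\mathbb L^2(\mathbb R^3)$; extracting a further subsequence, $\int_0^1\chi(s)\bm h^{(n)}(s,\cdot)\ud s\rightharpoonup\bm H$ weakly in $\mathbb L^2(\mathbb R^3)$. Taking $\bm\psi(t,\bm x)=\chi(t-t_n)\bm\Psi(\bm x)$ with $\bm\Psi\in\mathcal C^\infty_c(\mathbb R^3)$ in \eqref{eq:WeakFormulationExcitation} kills the $\partial_t$ term in the limit and, because $\bm e\in\mathbb L^2(\mathbb R^+\times\mathbb R^3)$ (globally in time, from the energy inequality controlling $\int_0^\infty\lVert\bm e\rVert^2_{\mathbb L^2(\Omega)}$ together with $\mathbb L^\infty_t\mathbb L^2$ outside $\Omega$ — here I should be careful and instead use the $\mathbb L^2$-in-time smallness of $\bm e$ near $t_n$ only on $\Omega$, combined with the Maxwell dynamics to propagate it), leaves $\curl\bm H=0$ in $\mathcal D'(\mathbb R^3)$. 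Similarly, taking the divergence of the first Maxwell equation (which the solution satisfies by propagation of the Faraday constraint, as noted in the model section) gives $\Div(\bm h(t,\cdot)+\overline{\bm m(t,\cdot)})=0$ for all $t$, and this linear constraint passes to the limit to give $\Div(\bm H+\overline{\bm u})=0$ in $\mathcal D'(\mathbb R^3)$.

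\textbf{Main obstacle.} The crux is controlling the Maxwell part: unlike the purely magnetostatic case, here $\bm h$ need not converge along $t_n$ without extraction, and one must show that the limit is \emph{static}, i.e. $\curl\bm H=0$. This requires genuinely using the dissipation to conclude that $\bm e(t,\cdot)\to0$ in a suitable averaged sense as $t\to\infty$ — the energy inequality only gives $\bm e\in\mathbb L^2_t\mathbb L^2(\Omega)$ directly, so propagating decay of $\bm e$ to all of $\mathbb R^3$ (or arguing directly at the level of the weak formulation with compactly supported test functions, so that only the behavior of $\bm e$ on a bounded set enters on each fixed time window) is the step that needs the most care. A secondary technical point is the uniform-in-$s$ strong $\mathbb L^2$ convergence of $\bm m(t_n+s,\cdot)$, which I would get from $\lVert\bm m(t_n+s)-\bm m(t_n)\rVert_{\mathbb L^2(\Omega)}\le\int_{t_n}^{t_n+1}\lVert\partial_t\bm m\rVert_{\mathbb L^2(\Omega)}\ud t\to0$ together with the $\mathbb H^1$-weak/$\mathbb L^2$-strong convergence of $\bm m(t_n,\cdot)$ itself.
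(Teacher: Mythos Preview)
Your treatment of points (1) and (2) is essentially the paper's: translate in time by $t_n$, use $\partial_t\bm m\in\mathbb L^2(\mathbb R^+\times\Omega)$ to make the left-hand side of the time-localized weak formulation vanish, and pass to the limit term by term via strong $\mathbb L^p$ and trace convergence together with weak $\mathbb H^1$ convergence for the exchange term. The paper obtains the uniform-in-$s$ trace convergence through Aubin's lemma with $B=\mathbb H^{3/4}(\Omega)$, but your argument via $\lVert\bm m(t_n+s)-\bm m(t_n)\rVert_{\mathbb L^2(\Omega)}\to0$ plus compactness of the trace is equivalent.

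The genuine gap is in (3), precisely at the obstacle you flag. With a \emph{fixed} window $[0,1]$, testing \eqref{eq:WeakFormulationExcitation} or \eqref{eq:WeakFormulationElectric} with $\chi(t-t_n)\bm\Psi(\bm x)$ leaves time-boundary terms such as $\int_0^1\chi'(s)\int_{\mathbb R^3}\bm e(t_n+s,\cdot)\cdot\bm\Psi\,\ud\bm x\,\ud s$ that do \emph{not} vanish as $n\to\infty$: $\bm h$ and $\bm e$ are only in $\mathrm L^\infty(\mathbb R^+;\mathbb L^2(\mathbb R^3))$, and the dissipation in \eqref{eq:EnergyInequality} controls $\bm e$ in $\mathbb L^2(\mathbb R^+\times\Omega)$ only, not on all of $\mathbb R^3$. ``Propagating the decay by the Maxwell dynamics'' is not a proof here. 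The paper's remedy is a \emph{double limit}: it averages over a growing window $[-a,a]$ with a cutoff $\rho_a$ equal to $1$ on $[-a+1,a-1]$ and $\lvert\rho_a'\rvert\le2$, so that after dividing by $2a$ the time-boundary terms carry a factor $1/a$ and disappear as $a\to\infty$ \emph{after} $n\to\infty$. For the iterated weak limits $\bm H$, $\bm E$ this produces both $\Rot\bm E=0$ in $\mathcal D'(\mathbb R^3)$ (from \eqref{eq:WeakFormulationExcitation}) and the identity $-\int_{\mathbb R^3}\bm H\cdot\Rot\xi+\sigma\int_\Omega\bm E\cdot\xi=0$ (from \eqref{eq:WeakFormulationElectric}, the $\bm f$-term vanishing since $\bm f\in\mathbb L^2(\mathbb R^+\times\Omega)$). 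Testing the latter with $\xi=\bm E\in\mathbb H_{\mathrm{curl}}(\mathbb R^3)$ yields $\sigma\int_\Omega\lvert\bm E\rvert^2=0$, hence $\bm E=0$ on $\Omega$, and feeding this back gives $\Rot\bm H=0$. Your direct route ``$\bm e\to0$ hence $\Rot\bm H=0$'' cannot be closed without this growing-window mechanism and the indirect step through $\bm E$.
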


\section{Technical prerequisite results on Sobolev Spaces}\label{sect:prerequisites}
In this section, we remind the reader about some useful previously
known results on Sobolev Spaces that we use in this paper. In the
whole section $\mathcal{O}$ is any bounded open set of $\mathbb{R}^3$,
regular enough for the usual embeddings result to hold. For example,
it is enough that $\mathcal{O}$ 
satisfy the cone property, see\cite[\S4.3]{Adams:SobolevSpaces}.

We start with Aubin's lemma \cite{Aubin:TheoremeCompacite}, as extended in~\cite[Corollary 4]{Simon:CompactSets}.
\begin{lemma}[Aubin's lemma]\label{lemma:AubinSimon}
Let $X\subset\subset B\subset Y$ be Banach spaces. Let $F$ be bounded
in $L^p(0,T;X)$. Suppose $\{\partial_tu, u\in F\}$ is bounded in
$L^r(0,T;Y)$. Suppose for all $t$ in $ $.
\begin{itemize} 
\item If $r\geq 1$ and $1\leq p<+\infty$, then $F$ is a compact subset of $L^p(0,T;X)$ .
\item If $r>1$ and $p=+\infty$, then $F$ is a compact subset of $\mathcal{C}(0,T;B)$.
\end{itemize}
\end{lemma}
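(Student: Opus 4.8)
The plan is to prove the compactness of $F$ by contradiction, combining two ingredients: the compact embedding $X\subset\subset B$ (which controls the ``spatial'' oscillations of elements of $F$) and the uniform bound on time derivatives in $L^r(0,T;Y)$ (which controls the ``temporal'' oscillations). The key technical device is the following interpolation-type inequality: for every $\eta>0$ there exists a constant $C_\eta$ such that
\begin{equation}\label{eq:EhrlingAubin}
\lVert v\rVert_{B}\leq \eta\,\lVert v\rVert_{X}+C_\eta\,\lVert v\rVert_{Y}
\qquad\text{for all }v\in X.
\end{equation}
This inequality (Ehrling's lemma) follows from the compactness of $X\subset\subset B$ and the continuity of $B\hookrightarrow Y$ by a standard contradiction argument on the unit sphere of $X$. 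I would establish \eqref{eq:EhrlingAubin} first, as it is the heart of the separation between spatial and temporal scales.

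First I would treat the case $r\geq 1$, $1\leq p<+\infty$. Let $(u_n)_n$ be a sequence in $F$; it suffices to extract a subsequence converging in $L^p(0,T;B)$. Since $F$ is bounded in $L^p(0,T;X)$ and $X\subset\subset B$ embeds $X$ compactly, the sequence is, for each fixed time, relatively compact in $B$; the essential difficulty is to upgrade this to strong convergence in the Bochner space $L^p(0,T;B)$, uniformly enough in $t$. The standard route is to apply \eqref{eq:EhrlingAubin} pointwise in time and integrate: it reduces the problem to showing that $(u_n)_n$ is relatively compact in $L^p(0,T;Y)$, because the $X$-term is absorbed by the uniform $L^p(0,T;X)$ bound once $\eta$ is chosen small. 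For compactness in $L^p(0,T;Y)$ I would invoke the Riesz--Fréchet--Kolmogorov criterion, which requires controlling the time-translates $\lVert u_n(\cdot+h)-u_n(\cdot)\rVert_{L^p(0,T-h;Y)}$. Here the time-derivative bound enters: writing $u_n(t+h)-u_n(t)=\int_t^{t+h}\partial_s u_n(s)\,\ud s$ and using the $L^r(0,T;Y)$ bound on $\partial_t u_n$ together with H\"older's inequality, one obtains a bound of the form $C\,h^{\theta}$ for some $\theta>0$, which tends to $0$ uniformly in $n$ as $h\to0$. This yields equicontinuity of the time-translates, hence relative compactness in $L^p(0,T;Y)$, and therefore, via \eqref{eq:EhrlingAubin}, in $L^p(0,T;B)$.

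For the second case, $r>1$ and $p=+\infty$, the target space is the space of continuous functions $\mathcal{C}(0,T;B)$, so I would instead aim for the Arzelà--Ascoli theorem. Uniform boundedness in $B$ follows from the $L^\infty(0,T;X)$ bound and the continuity $X\hookrightarrow B$. For equicontinuity in $B$, I would again write $u_n(t')-u_n(t)=\int_t^{t'}\partial_s u_n(s)\,\ud s$ and apply \eqref{eq:EhrlingAubin} to this difference: the $Y$-norm is controlled by $\lvert t'-t\rvert^{1-1/r}$ through the $L^r$ bound on the derivative (this is where $r>1$ is needed, to get a genuine modulus of continuity rather than mere boundedness), while the $X$-norm of the difference is uniformly bounded, so choosing $\eta$ small and then $\lvert t'-t\rvert$ small makes the $B$-norm of the increment uniformly small. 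Together with pointwise relative compactness in $B$ (from $X\subset\subset B$), Arzelà--Ascoli delivers a uniformly convergent subsequence in $\mathcal{C}(0,T;B)$.

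\textbf{Main obstacle.} The delicate point throughout is the passage from pointwise-in-time compactness to compactness in the Bochner norm: the embedding $X\subset\subset B$ alone gives compactness at each fixed $t$ but provides no uniformity across $t$, and it is precisely the interplay between \eqref{eq:EhrlingAubin} and the time-derivative bound that supplies this uniformity. Verifying the uniform modulus of continuity of the time-translates (equivalently, the Kolmogorov--Riesz equicontinuity) from the $L^r$-bound on $\partial_t u_n$ is the step where the hypotheses on $r$ and $p$ are genuinely used, and where care is required near the endpoints $t=0$ and $t=T$.
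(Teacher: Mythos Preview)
The paper does not prove this lemma: it is stated as a known result and attributed to Aubin~\cite{Aubin:TheoremeCompacite} and its extension by Simon~\cite[Corollary~4]{Simon:CompactSets}. So there is no proof in the paper to compare your proposal against.

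That said, your plan is the standard one and is essentially the route taken in Simon's paper: Ehrling's inequality to decouple spatial and temporal compactness, Riesz--Fr\'echet--Kolmogorov for the $L^p$ case via uniform control of time-translates, and Arzel\`a--Ascoli for the $p=+\infty$, $r>1$ case. Two small remarks. First, the statement as printed contains typos (an unfinished sentence ``for all $t$ in \ldots'' and the conclusion in the first item should read $L^p(0,T;B)$, not $L^p(0,T;X)$); your proof plan correctly targets $L^p(0,T;B)$. Second, in the $p=+\infty$ case the elements of $F$ are a priori only in $L^\infty(0,T;X)$, so before invoking Arzel\`a--Ascoli you must check they admit continuous representatives with values in $B$; this follows from $\partial_t u\in L^r(0,T;Y)$ with $r>1$ (so $u\in\mathcal{C}(0,T;Y)$) combined with the $L^\infty(0,T;X)$ bound and Ehrling, but it is worth making explicit.
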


\begin{lemma}\label{lemma:compactCL2imbedding}
For all $T>0$, the imbedding from $H^1(\rbrack0,T\lbrack\cart\mathcal{O})$ to 
$\mathcal{C}(\lbrack0,T\rbrack,L^2(\mathcal{O}))$ is compact.
\end{lemma}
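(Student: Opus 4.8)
The plan is to prove Lemma~\ref{lemma:compactCL2imbedding} by a direct application of Aubin's lemma (Lemma~\ref{lemma:AubinSimon}) with the time variable playing the role of the ``$t$'' variable there and the spatial domain $\mathcal{O}$ providing the Banach spaces. Write $u\in H^1(\rbrack0,T\lbrack\cart\mathcal{O})$ as a function of $t\in\rbrack0,T\lbrack$ with values in spaces of functions on $\mathcal{O}$. The first step is the elementary but essential identification: $H^1(\rbrack0,T\lbrack\cart\mathcal{O})$ embeds continuously into $L^2(0,T;H^1(\mathcal{O}))$ and into $H^1(0,T;L^2(\mathcal{O}))$, since the full gradient on $\rbrack0,T\lbrack\cart\mathcal{O}$ controls both the spatial gradient (for a.e.\ $t$, with $L^2$-in-$t$ bound) and the time derivative (as an $L^2(\mathcal{O})$-valued $L^2$ function). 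This is a standard Fubini/Tonelli argument on the weak derivatives.

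Next I would set up the triple for Aubin's lemma: take $X=H^1(\mathcal{O})$, $B=L^2(\mathcal{O})$, $Y=L^2(\mathcal{O})$. The embedding $X\hookrightarrow B$ is compact by the Rellich--Kondrachov theorem, valid because $\mathcal{O}$ is assumed regular enough (the cone property suffices, as stated in \S\ref{sect:prerequisites}); the embedding $B\hookrightarrow Y$ is trivially the identity, hence continuous. Given a bounded set $F\subset H^1(\rbrack0,T\lbrack\cart\mathcal{O})$, Step~1 shows $F$ is bounded in $L^\infty$? — no: only in $L^2(0,T;H^1(\mathcal{O}))$ and $H^1(0,T;L^2(\mathcal{O}))$. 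So I would apply the \emph{first} bullet of Lemma~\ref{lemma:AubinSimon} with $p=r=2$ to get that $F$ is relatively compact in $L^2(0,T;H^1(\mathcal{O}))$ — but that is not yet the claim. To reach $\mathcal{C}(\lbrack0,T\rbrack,L^2(\mathcal{O}))$ I instead use the second bullet: here $F$ bounded in $H^1(0,T;L^2(\mathcal{O}))\subset L^\infty(0,T;L^2(\mathcal{O}))$ does not follow from the hypotheses of the $p=+\infty$ case directly, so the cleaner route is: apply Aubin with $X=H^1(\mathcal{O})$, $Y=L^2(\mathcal{O})$, noting $F$ is bounded in $L^2(0,T;H^1(\mathcal{O}))$ with $\{\partial_t u\}$ bounded in $L^2(0,T;L^2(\mathcal{O}))$; by Sobolev embedding in one dimension $H^1(0,T;L^2(\mathcal{O}))\hookrightarrow\mathcal{C}(\lbrack0,T\rbrack,L^2(\mathcal{O}))$ continuously, and a refinement of Aubin--Simon (\cite[Corollary 4]{Simon:CompactSets}, the $p=\infty$, $r>1$ case applied after observing boundedness in $L^\infty(0,T;H^1)$ is \emph{not} available) — so the honest argument is to combine relative compactness in $L^2(0,T;L^2(\mathcal{O}))$ from the first bullet with the equicontinuity in $\mathcal{C}(\lbrack0,T\rbrack,L^2(\mathcal{O}))$ furnished by the uniform $H^1$-in-time bound via $\lVert u(t_1)-u(t_2)\rVert_{L^2(\mathcal{O})}\le |t_1-t_2|^{1/2}\lVert\partial_t u\rVert_{L^2(0,T;L^2(\mathcal{O}))}$, and then invoke Arzelà--Ascoli.

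Concretely, the cleanest self-contained path, which I would write out, is: (i) the uniform Hölder-$\tfrac12$ bound in time just displayed gives equicontinuity of $F$ as a subset of $\mathcal{C}(\lbrack0,T\rbrack,L^2(\mathcal{O}))$; (ii) for fixed $t$, the set $\{u(t): u\in F\}$ is bounded in $H^1(\mathcal{O})$ — this needs a short argument since ``$u(t)$'' must be made sense of, which follows once we know $F\subset\mathcal{C}(\lbrack0,T\rbrack,L^2(\mathcal{O}))$ with values that are a.e.\ in $H^1$ and bounded there in $L^2_t$, upgraded to a pointwise bound by lower semicontinuity of the $H^1$-norm under $L^2$-convergence along a sequence of Lebesgue points; hence $\{u(t):u\in F\}$ is relatively compact in $L^2(\mathcal{O})$ by Rellich; (iii) Arzelà--Ascoli then yields relative compactness of $F$ in $\mathcal{C}(\lbrack0,T\rbrack,L^2(\mathcal{O}))$. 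Since the inclusion map is linear, relative compactness of the image of bounded sets is exactly compactness of the embedding.

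The main obstacle is step (ii): obtaining a \emph{uniform pointwise-in-$t$} $H^1(\mathcal{O})$ bound from a bound that a priori only holds in $L^2(0,T;H^1(\mathcal{O}))$. One must exploit the continuity in $t$ with values in $L^2(\mathcal{O})$ together with weak lower semicontinuity of $v\mapsto\lVert v\rVert_{H^1(\mathcal{O})}$: for any $t$, choose Lebesgue points $s_k\to t$ with $\lVert u(s_k)\rVert_{H^1(\mathcal{O})}$ bounded by (essentially) the $L^2_t$-average, extract a weakly convergent subsequence in $H^1(\mathcal{O})$ whose limit must coincide with $u(t)$ by the $L^2$-continuity, and conclude $\lVert u(t)\rVert_{H^1(\mathcal{O})}\le\liminf_k\lVert u(s_k)\rVert_{H^1(\mathcal{O})}$. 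Everything else — the embedding identifications of Step~1, Rellich, the Hölder bound in time, Arzelà--Ascoli — is routine. I expect the write-up to be short, three or four displayed estimates at most.
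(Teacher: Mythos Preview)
Your diagnosis of the difficulty is sharper than the paper's own proof, which is a one-line invocation of the $p=+\infty$ case of Aubin--Simon with $X=H^1(\mathcal{O})$, $B=Y=L^2(\mathcal{O})$; as you note, that case requires boundedness in $L^\infty(0,T;H^1(\mathcal{O}))$, which $H^1(\rbrack0,T\lbrack\cart\mathcal{O})$ does not supply. So your more careful route is warranted.

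However, the path you single out as ``cleanest'' has a real gap at step~(ii). The Lebesgue-point argument cannot produce a \emph{uniform} pointwise $H^1(\mathcal{O})$ bound: from $\int_0^T\lVert u(s)\rVert_{H^1}^2\,\ud s\le M$ you can only say that on any interval of length $\delta$ there is some $s$ with $\lVert u(s)\rVert_{H^1}^2\le M/\delta$, and this blows up as $s\to t$. Weak lower semicontinuity then gives $\lVert u(t)\rVert_{H^1}\le\liminf_k\lVert u(s_k)\rVert_{H^1}$, but the right-hand side need not be finite, and in fact $u(t,\cdot)$ need not lie in $H^1(\mathcal{O})$ for \emph{every} $t$. (What is true, by Lions--Magenes interpolation, is $L^2(0,T;H^1)\cap H^1(0,T;L^2)\hookrightarrow \mathcal{C}([0,T];H^{1/2}(\mathcal{O}))$, and $H^{1/2}(\mathcal{O})\hookrightarrow L^2(\mathcal{O})$ compactly --- so step~(ii) can be salvaged with $H^{1/2}$ in place of $H^1$, at the cost of invoking another nontrivial tool.)

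The route you mention just before --- and should write out instead --- is the correct one: apply the first bullet of Lemma~\ref{lemma:AubinSimon} with $p=r=2$ to get relative compactness of $F$ in $L^2(0,T;L^2(\mathcal{O}))$, and combine this with the uniform H\"older-$\tfrac12$ equicontinuity bound $\lVert u(t_1)-u(t_2)\rVert_{L^2(\mathcal{O})}\le |t_1-t_2|^{1/2}\lVert\partial_t u\rVert_{L^2(Q_T)}$. Given a sequence $(u_n)$ in $F$, extract a subsequence converging in $L^2(0,T;L^2(\mathcal{O}))$ to some $u$; pass to a further subsequence converging for a.e.\ $t$ in $L^2(\mathcal{O})$; equicontinuity then upgrades this to convergence for every $t$, and a standard $\varepsilon/3$ argument (again using equicontinuity) makes the convergence uniform in $t$. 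No pointwise $H^1$ control is needed.
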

\begin{proof}
Use the Aubin's lemma, see~\cite[Corollary 4]{Simon:CompactSets}, 
extended to the case $p=+\infty$, with
$X=H^1(\mathcal{O})$ and  $B=Y=L^2(\Omega)$.
\end{proof}

\begin{lemma}
Let $u$ belong to $H^1(\rbrack0,T\lbrack\cart\mathcal{O})\cap
L^\infty(\rbrack0,T\lbrack;{H}^1(\mathcal{O}))$,
then $u$ belongs to
$\mathcal{C}(\lbrack0,T\rbrack;\mathbb{H}^1_\omega(\mathcal{O}))$
where $H_\omega^1(\mathcal{O})$ is the space $H^1(\mathcal{O})$ but with the weak topology.
\end{lemma}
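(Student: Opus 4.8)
The plan is to prove continuity in time with values in $H^1(\mathcal{O})$ equipped with the weak topology by combining two facts: a boundedness fact coming from $L^\infty(\rbrack0,T\lbrack;H^1(\mathcal{O}))$, and a continuity fact coming from $H^1(\rbrack0,T\lbrack\cart\mathcal{O})$, which in particular (by Lemma~\ref{lemma:compactCL2imbedding}, or just by the trace theory for $H^1$ in the time variable) gives $u\in\mathcal{C}(\lbrack0,T\rbrack;L^2(\mathcal{O}))$. So fix $t_0\in\lbrack0,T\rbrack$ and a sequence $t_n\to t_0$ in $\lbrack0,T\rbrack$; I want to show $u(t_n,\cdot)\rightharpoonup u(t_0,\cdot)$ weakly in $H^1(\mathcal{O})$.

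First I would record that $\lVert u(t,\cdot)\rVert_{H^1(\mathcal{O})}$ is bounded by some constant $M$ for almost every $t$, and then argue that in fact this holds for \emph{every} $t\in\lbrack0,T\rbrack$ for the continuous ($L^2$) representative: indeed $u(t_0,\cdot)$ is a strong $L^2$-limit of $u(s_k,\cdot)$ along a sequence $s_k\to t_0$ of "good" times where the $H^1$-bound holds, and by weak lower semicontinuity of the $H^1$-norm (or weak compactness: a subsequence of $u(s_k,\cdot)$ converges weakly in $H^1$, necessarily to $u(t_0,\cdot)$ since the $L^2$-limit is already identified), we get $\lVert u(t_0,\cdot)\rVert_{H^1(\mathcal{O})}\le M$. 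The same reasoning applied at each $t_n$ shows the whole family $\{u(t_n,\cdot)\}_n$ is bounded in $H^1(\mathcal{O})$.

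Next, since $\{u(t_n,\cdot)\}_n$ is bounded in the Hilbert space $H^1(\mathcal{O})$, it is weakly sequentially precompact. To conclude $u(t_n,\cdot)\rightharpoonup u(t_0,\cdot)$ it suffices to show every weakly convergent subsequence has limit $u(t_0,\cdot)$. But if $u(t_{n_k},\cdot)\rightharpoonup w$ weakly in $H^1(\mathcal{O})$, then by the compact embedding $H^1(\mathcal{O})\hookrightarrow L^2(\mathcal{O})$ we have $u(t_{n_k},\cdot)\to w$ strongly in $L^2(\mathcal{O})$; on the other hand $u(t_{n_k},\cdot)\to u(t_0,\cdot)$ strongly in $L^2(\mathcal{O})$ by the $\mathcal{C}(\lbrack0,T\rbrack;L^2(\mathcal{O}))$ regularity. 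Hence $w=u(t_0,\cdot)$, the limit is independent of the subsequence, and therefore the full sequence converges weakly. Since $t_0$ and the sequence $t_n$ were arbitrary, this is exactly sequential continuity of $t\mapsto u(t,\cdot)$ from $\lbrack0,T\rbrack$ into $H^1_\omega(\mathcal{O})$; because $\lbrack0,T\rbrack$ is metrizable and, on the bounded set traced out by $u$, the weak topology of $H^1(\mathcal{O})$ is metrizable too ($H^1(\mathcal{O})$ being separable and reflexive), sequential continuity upgrades to continuity.

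The only mild subtlety — and the step I'd flag as the one needing care rather than the "hard part" — is the passage from the almost-everywhere $H^1$-bound to an everywhere bound on the continuous representative, and the claim that on the relevant bounded set weak-$H^1$ convergence is metrizable so that sequential continuity gives genuine continuity; both are standard but should be stated cleanly. Everything else (weak lower semicontinuity of the norm, compactness of $H^1\hookrightarrow L^2$ for $\mathcal{O}$ with the cone property, and the $\mathcal{C}(\lbrack0,T\rbrack;L^2)$ regularity) is either classical or already available from Lemma~\ref{lemma:compactCL2imbedding}.
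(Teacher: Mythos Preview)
Your proof is correct and follows essentially the same approach as the paper: use $u\in\mathcal{C}(\lbrack0,T\rbrack;L^2(\mathcal{O}))$ together with the $H^1$-boundedness of $u(t_n,\cdot)$, extract weakly convergent subsequences, and identify the limit via the $L^2$ convergence. You are in fact more careful than the paper on two points the paper glosses over---the passage from an almost-everywhere $H^1$ bound to an everywhere bound on the continuous representative, and the upgrade from sequential continuity to continuity via metrizability of the weak topology on bounded sets.
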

\begin{proof}
The function $u$, belongs to
$\mathcal{C}(\lbrack0,T\rbrack,\mathbb{L}^2(\mathcal{O}))$. Let now $(t_n)_n$ be a
sequence in $\lbrack0,T\rbrack$ converging to $t$. Then, 
$u(t_n,\cdot)$ converges to
$u(t,\cdot)$ in $L^2(\mathcal{O})$. Also, the sequence
$(u(t_n,\cdot))_{n\in\mathbb{N}}$ is bounded in
${H}^1(\mathcal{O})$, therefore from any subsequence of 
$(u(t_n,\cdot))_{n\in\mathbb{N}}$, one can extract a subsequence
that converges weakly in $H^1(\mathcal{O})$. The only possible limit is
$u(t,\cdot)$  therefore the whole sequence converges weakly in $H^1(\mathcal{O})$.
\end{proof}

\begin{lemma}\label{lemma:SameSubsequenceGivenTime}
Let $(u_n)_{n\in\mathbb{N}}$ be bounded in
$H^1(\rbrack0,T\lbrack\cart\mathcal{O})$ and in 
$L^\infty(\rbrack0,T\lbrack;H^1(\mathcal{O}))$. Let
$(u_{n_k})_{k\in\mathbb{N}}$
be a subsequence which converges weakly to
some $u$ in $H^1(\rbrack0,T\lbrack\cart\mathcal{O})$. Then, for all $t$ in
$\lbrack0,T\rbrack$, the same subsequence $u_{n_k}(t,\cdot)$ converges weakly to
$u(t,\cdot)$ in $H^1(\mathcal{O})$.
\end{lemma}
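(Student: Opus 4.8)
The plan is to exploit that the two bounds control $(u_n)$ in complementary ways: the $H^1(\rbrack0,T\lbrack\cart\mathcal{O})$ bound gives time-regularity and, via Lemma~\ref{lemma:compactCL2imbedding}, strong convergence of traces at each fixed time in $L^2(\mathcal{O})$, while the $L^\infty(\rbrack0,T\lbrack;H^1(\mathcal{O}))$ bound gives, at each fixed time, a uniform $H^1(\mathcal{O})$ bound on the slices. Combining a strong $L^2$ limit with a uniform $H^1$ bound pins down the weak $H^1$ limit uniquely. Let me lay out the steps.

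First I would invoke Lemma~\ref{lemma:compactCL2imbedding}: since $(u_n)$ is bounded in $H^1(\rbrack0,T\lbrack\cart\mathcal{O})$, the imbedding into $\mathcal{C}(\lbrack0,T\rbrack,L^2(\mathcal{O}))$ is compact, so up to a further subsequence $u_{n_k}\to v$ in $\mathcal{C}(\lbrack0,T\rbrack,L^2(\mathcal{O}))$ for some $v$; but $u_{n_k}\rightharpoonup u$ in $H^1(\rbrack0,T\lbrack\cart\mathcal{O})$ already forces $v=u$ in $L^2(\rbrack0,T\lbrack\cart\mathcal{O})$, hence $v=u$. Crucially, because the only possible limit is $u$, it is not merely a sub-subsequence but the full subsequence $u_{n_k}$ that converges to $u$ in $\mathcal{C}(\lbrack0,T\rbrack,L^2(\mathcal{O}))$ (any sub-subsequence has a further sub-subsequence converging to the same $u$). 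In particular, for every $t\in\lbrack0,T\rbrack$, $u_{n_k}(t,\cdot)\to u(t,\cdot)$ strongly in $L^2(\mathcal{O})$.

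Next, fix $t\in\lbrack0,T\rbrack$. By the $L^\infty(\rbrack0,T\lbrack;H^1(\mathcal{O}))$ bound, $\lVert u_{n_k}(t,\cdot)\rVert_{H^1(\mathcal{O})}\le C$ uniformly in $k$ (for a.e.\ $t$; one argues at the remaining $t$ by continuity in the weak-$H^1$ topology established in the preceding lemma, or simply notes the statement is needed for the relevant trajectories which are weakly $H^1$-continuous). Hence from any subsequence of $(u_{n_k}(t,\cdot))_k$ one can extract a sub-subsequence converging weakly in $H^1(\mathcal{O})$ to some limit; that weak limit must agree with the strong $L^2$ limit $u(t,\cdot)$ by uniqueness of limits in $\mathcal{D}'(\mathcal{O})$. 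Since every subsequence has a sub-subsequence converging weakly in $H^1(\mathcal{O})$ to the same element $u(t,\cdot)$, the whole sequence $u_{n_k}(t,\cdot)$ converges weakly to $u(t,\cdot)$ in $H^1(\mathcal{O})$. As $t\in\lbrack0,T\rbrack$ was arbitrary, this proves the claim.

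The only genuinely delicate point is the passage from ``a.e.\ $t$'' (where the $L^\infty$-in-time bound on the $H^1$ norm is literally available) to ``all $t\in\lbrack0,T\rbrack$''. This is handled by the previous lemma, which shows any function in $H^1(\rbrack0,T\lbrack\cart\mathcal{O})\cap L^\infty(\rbrack0,T\lbrack;H^1(\mathcal{O}))$ has a representative in $\mathcal{C}(\lbrack0,T\rbrack;H^1_\omega(\mathcal{O}))$; applying it to each $u_{n_k}$ gives the uniform $H^1$ bound at \emph{every} $t$ (the bound being the essential supremum, which is attained in the limit along the weakly continuous representative), and the argument above then goes through verbatim for all $t$. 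Everything else is a standard ``unique limit promotes subsequential to full convergence'' argument.
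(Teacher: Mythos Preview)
Your proof is correct and follows essentially the same approach as the paper's: obtain strong convergence of the time-slices in $L^2(\mathcal{O})$ from Lemma~\ref{lemma:compactCL2imbedding}, then combine this with the uniform $H^1(\mathcal{O})$ bound on slices and the standard ``every subsequence has a sub-subsequence with the same limit'' argument. You are in fact more careful than the paper on two points it leaves implicit: that the full subsequence (not a further sub-subsequence) converges in $\mathcal{C}(\lbrack0,T\rbrack;L^2(\mathcal{O}))$, and the passage from ``a.e.\ $t$'' to ``all $t$'' via the weak-$H^1$ continuity of the preceding lemma.
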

\begin{proof}
For all $t$ in $\lbrack0,T\rbrack$, $u_{n_k}(t,\cdot)$ converges strongly
to $u(t,\cdot)$ in $L^2(\mathcal{O})$. Therefore, any subsequence 
$u_{n_{k_j}}(t,\cdot)$ that converges weakly in $H^1(\mathcal{O})$ has 
$u(t,\cdot)$ for limit. Since $u_{n_k}(t,\cdot)$ is bounded
in $H^1(\mathcal{O})$, from any subsequence of $u_{n_k}(t,\cdot)$, one
can extract a further subsequence  that converges weakly in
$H^1(\mathcal{O})$, therefore, for all $t$ in $\lbrack0,T\rbrack$,
the whole subsequence $u_{n_k}(t,\cdot)$
converges weakly to $u(t,\cdot)$ in $H^1(\mathcal{O})$.
\end{proof}

\section{Proof of
  Theorem~\ref{theo:ExistenceWeakLandauLifshitzMaxwellSurfaceEnergies}}
\label{sect:proofExistenceWeakLLMSurfaceEnergies}
\subsection{Idea of the proof}
We proceed as in \cite{Carbou.Fabrie:Time}
and~\cite{Santugini:SolutionsLL} and combine the ideas of both
papers. 
We start by extending the surface energies to
a thin layer of thickness $2\eta>0$. 

As in~\cite{Santugini:SolutionsLL}, we consider the operator
\begin{equation}
\begin{gathered}
\mathcal{H}_s^\eta:\mathbb{H}^1(\Omega) \cap \mathbb{L}^\infty(\Omega)
\to\mathbb{H}^1(\Omega) \cap \mathbb{L}^\infty(\Omega)\\
\bm{m}\mapsto\frac{1}{2\eta}\begin{cases}
	0&\text{in $\mathbb{R}^3\setminus(\,\base\cart(\Interv{}\setminus\Interveta{\eta}{})\;)$},	\\
	\begin{gathered}
	2K_s((\bm{m}\cdot\bm{\nu})\bm{\nu}-\bm{m})+2J_1(\bm{m}^{*}-\bm{m})\\
	+4J_2\big((\bm{m}\cdot\bm{m}^{*})\bm{m}^{*}
		-\lVert\bm{m}^{*}\rVert^2\bm{m}\big)
	\end{gathered}
	&\text{in $\base\cart(\Interv{}
		\setminus\Interveta{\eta}{})$},
	\end{cases}
\end{gathered}
\end{equation} 
where $\bm{m}^*$ is the reflection of $\bm{m}$, \textit{i.e.}
$\bm{m}^*(x,y,z,t)=\bm{m}(x,y,-z,t)$, see Figure~\ref{fig:boundarylayer}.
\begin{figure}[ht]
\begin{center}
\begin{tikzpicture}
\draw (0,-2) rectangle (5,2);
\fill[lightgray] (0,0) rectangle (5, 0.2) ;
\fill[lightgray] (0,0) rectangle (5,-0.2) ;
\draw (0,0) -- (5,0);
\draw[<->] (5.2,-0.2) -- (5.2, 0.2);
\node at (5.4,0) {$\eta$};
\end{tikzpicture}
\caption{Artificial boundary layer}\label{fig:boundarylayer}
\end{center}
\end{figure}
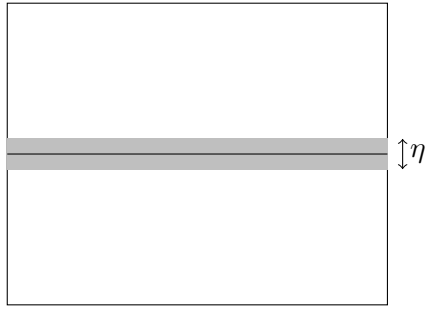
The associated energy is:
\begin{equation}
\begin{split}
\mathrm{E}_s^\eta(\bm{m})&=\frac{K_s}{2\eta}
\int_{\base\cart(\Interv{}\setminus\Interveta{\eta}{})}
\left(\lVert\bm{m}\rVert^2-(\bm{m}\cdot\bm{\nu})^2\right)\ud\bm{x}\\
&\phantom{=}+\frac{J_1}{2\eta}
\int_{\base\cart(\Interv{}\setminus\Interveta{\eta}{})}
\left(\frac{\lVert \bm{m}\rVert^2+\lVert \bm{m}^{*}\rVert^2}{2}-(\bm{m}\cdot\bm{m}^{*})\right)	\ud\bm{x}\\
&\phantom{=}+\frac{J_2}{2\eta}				
\int_{\base\cart\Interv{}\setminus\Interveta{\eta}{}}
\left(\lVert\bm{m}^{*}\rVert^2\lVert\bm{m}\rVert^2-(\bm{m}\cdot\bm{m}^{*})^2\right)
	\ud\bm{x}.
\end{split}
\end{equation}
This energy will replace the surfacic ones~\eqref{eq:SuperExchangeEnergy} and~\eqref{eq:SurfaceAnisotropyEnergy}. The idea
is to consider the Landau-Lifshitz-Maxwell system with homogenous
Neumann boundary conditions with the excitation containing this new
component then have $\eta$ tend to $0$.

We consider the doubly penalized problem:
\begin{subequations}\label{subeq:PenalizedLL}
\begin{align}
\begin{split}
\alpha\frac{\partial\bm{m}_{k,\eta}}{\partial t}+\bm{m}_{k,\eta}\vect\frac{\partial\bm{m}_{k,\eta}}{\partial t}&=
(1+\alpha^2)(A\Lapl\bm{m}-\mathbf{K}\bm{m}+\bm{h}_{k,\eta}+\mathcal{H}_s^\eta(\bm{m}_{k,\eta}))
\\&\phantom{=}
-k(1+\alpha^2)((\lVert\bm{m}_{k,\eta}\rVert^2-1)\bm{m}_{k,\eta}),
\end{split}\\
\frac{\partial\bm{m}_{k,\eta}}{\partial\bm{\nu}}&=0 \quad\text{on $\partial\Omega$},\\
\bm{m}_{k,\eta} (0,\cdot)&=\bm{m}_0,
\end{align}
\end{subequations}
with Maxwell equations:
\begin{subequations}
\begin{align}
\varepsilon_0\frac{\partial\bm{e}_{k,\eta}}{\partial t}+\sigma(\bm{e}_{k,\eta}+\bm{f})\mathds{1}_\Omega-\Rot\bm{h}_{k,\eta}&=0,\\
\mu_0\frac{\partial(\bm{m}_{k,\eta}+\bm{h}_{k,\eta})}{\partial t}+\Rot\bm{e}_{k,\eta}&=0,\\
\bm{e}_{k,\eta} (0,\cdot)&=\bm{e}_0,\\
\bm{h}_{k,\eta} (0,\cdot)&=\bm{h}_0.
 \end{align}
\end{subequations}

The basic idea is to prove the existence of weak solutions to the penalized problem via 
Galerkin, then have $k$ tend to $+\infty$ to satisfy the local norm
constraint on the magnetization, then have $\eta$ tend to $0$ to transform the homogenous Neumann
boundary condition into the nonlinear condition above. 

\subsection{First Step of Galerkin's method }
As
in~\cite{Alouges.Soyeur:OnGlobal} we consider the eigenvectors
$(v_j)_{j\geq1}$ of the Laplace operator
with Neumann homogenous conditions. This basis is, up to a
renormalisation, an hilbertian basis for the spaces
$\mathbb{L}^2(\Omega)$, $\mathbb{H}^1(\Omega)$, and
 $\{\bm{u}\in
\mathbb{H}^2(\Omega),\frac{\partial\bm{u}}{\partial\bm{\nu}}=0\}$. The
 eigenvectors $v_k$ all belong to $\mathcal{C}^\infty(\overline{\Omega};\mathbb{R}^3)$.
We call $V_n$ the space spanned by $(v_j)_{1\leq j\leq n}$.
As in \cite{Carbou.Fabrie:Time}, we
consider an hilbertian basis $(\bm{\omega}_j)_{j\geq1}$ of
$\mathrm{L}^2(\mathbb{R}^3;\mathbb{R}^3)$ such that every $\bm{\omega}_j$ belongs
to $\mathcal{C}^\infty_c(\mathbb{R}^3;\mathbb{R}^3)$.
We call $W_n$ the space spanned by $(\bm{\omega}_j)_{0\leq j\leq n}$.

 Set $n\geq1$, $\eta>0$ and $k>0$.
We search for 
$\bm{m}_{n,k,\eta}$ in $H^1(\mathbb{R}^+;(V_n)^3)$,
$\bm{h}_{n,k,\eta}$ in $H^1(\mathbb{R}^+;W_n)$, and
$\bm{e}_{n,k,\eta}$ in $H^1(\mathbb{R}^+;W_n)$
such that
\begin{subequations}\label{subeq:EqDiffmhenketa}
\begin{equation}
 \begin{split}
\alpha\frac{\ud\bm{m}_{n,k,\eta}}{\ud t}&=-\mathcal{P}_{V_n}(\bm{m}_{n,k,\eta}\vect \frac{\ud\bm{m}_{n,k,\eta}}{\ud t})
\\&\phantom{=} +(1+\alpha^2)\mathcal{P}_{V_n}(A\Lapl\bm{m}_{n,k,\eta}-\mathbf{K}\bm{m}_{n,k,\eta})
\\&\phantom{=}
 +(1+\alpha^2)\mathcal{P}_{V_n}(\bm{h}_{n,k,\eta}
+\mathcal{H}_s^\eta(\bm{m}_{n,k,\eta}))
\\&\phantom{=}
 -(1+\alpha^2)k\mathcal{P}_{V_n}((\lVert\bm{m}_{n,k,\eta}\rVert^2-1)\bm{m}_{n,k,\eta}),
 \end{split}
\end{equation}
and
\begin{equation}
\begin{split}
\mu_0\frac{\ud\bm{h}_{n,k,\eta}}{\ud
  t}&=-\mu_0\mathcal{P}_{W_n}\left(\frac{\ud\bm{m}_{n,k,\eta}}{\ud
    t}\right)
+\mathcal{P}_{W_n}(\Rot\bm{e}_{n,k,\eta}).
\end{split}
\end{equation}
and
\begin{equation}
\begin{split}
\varepsilon_0\frac{\ud\bm{e}_{n,k,\eta}}{\ud t}&=
-\mathcal{P}_{W_n}(\Rot\bm{h}_{n,k,\eta})
-\mathcal{P}_{W_n}(\mathds{1}_\Omega(\bm{e}_{n,k,\eta}+\bm{f})),
\end{split}
\end{equation}
\end{subequations} 
with the inital conditions:
\begin{subequations}\label{subeq:EqDiffCondInitmhenketa}
\begin{align}
\bm{m}_{n,k,\eta}(0,\cdot)=\mathcal{P}_{V_n}(\bm{m}_0),\\
\bm{h}_{n,k,\eta}(0,\cdot)=\mathcal{P}_{W_n}(\bm{h}_0),\\
\bm{e}_{n,k,\eta}(0,\cdot)=\mathcal{P}_{W_n}(\bm{e}_0),
\end{align}
\end{subequations} 
where $\mathcal{P}_{V_n}$ is the orthogonal projection on $V_n$ in
$\mathrm{L}^2(\Omega)$ and $\mathcal{P}_{W_n}$ is the orthogonal projection on $W_n$ in
$\mathbb{L}^2(\Omega;\mathbb{R}^3))$. Let $\mathbf{a}(t)=(\bm{a}_i(t))_{1\leq
  i\leq n}$,
$\mathbf{b}=(b_i)_{1\leq i\leq n}$ and $\mathbf{c}(t)=(c_i(t))_{1\leq
  i\leq n}$ be the coefficients of $\bm{m}_{n,k,\eta}(t,\cdot)$,
$\bm{h}_{n,k,\eta}(t,\cdot)$ and $\bm{e}_{n,k,\eta}(t,\cdot)$ in the decomposition
\begin{align*}
\bm{m}_{n,k,\eta}(t,\cdot)&=\sum_{i=1}^n\bm{a_i}(t)v_i,\\
\bm{h}_{n,k,\eta}(t,\cdot)&=\sum_{i=1}^nb_i(t)\bm{\omega}_i,\\
\bm{e}_{n,k,\eta}(t,\cdot)&=\sum_{i=1}^nc_i(t)\bm{\omega}_i.
\end{align*}
Then, System~\eqref{subeq:EqDiffmhenketa} is equivalent to
\begin{subequations}\label{subeq:EqDiffmhenketaCoeffABC}
\begin{align}
\frac{\ud\mathbf{a}}{\ud
 t}+\mathbf{\phi}(\mathbf{a},\frac{\ud\mathbf{a}}{\ud t})&=F_{\bm{m}}(\mathbf{a},\mathbf{b}),\\
\frac{\ud(\mathbf{b}+L\mathbf{a})}{\ud  t}&=F_{\bm{h}}(\mathbf{c}),\\
\frac{\ud\mathbf{c}}{\ud
 t}&=F_{\bm{e}}(\bm{h}_{n,k,\eta},\bm{e}_{n,k,\eta})+\mathbf{f}^*,
\end{align}
\end{subequations}
where $L$ is linear, $F_{\bm{m}}$, $F_{\bm{h}}$ and $F_{\bm{e}}$ are polynomial thus
of class $\mathcal{C}^\infty$, and $\bm{f}^*$ is in
$\mathrm{L}^2(\mathbb{R}^+;\mathbb{R}^n)$. 
These are supplemented by initial conditions
%\begin{subequations}
\begin{align}\label{eq:EqDiffCondInitmnketaCoeffABC}
\mathbf{a}(0,\cdot)&=\mathbf{a}_0,&
\mathbf{b}(0,\cdot)&=\mathbf{b}_0,&
\mathbf{c}(0,\cdot)&=\mathbf{c}_0,
\end{align}
%\end{subequations}
where $\mathbf{a}_0$, $\mathbf{b}_0$, and $\mathbf{c}_0$ are obtained
by orthogonal projection of $\bm{m}_0$,  $\bm{h}_0$,  $\bm{e}_0$ over the $v_i$
or the $\bm{\omega}_i$.  As $\mathbf{\phi}(\cdot,\cdot)$ is bilinear
continuous and $\mathbf{\phi}(\mathbf{a},\cdot)$ is
antisymmetric, the linear application $\mathrm{Id}-\mathbf{\phi}(\mathbf{a},\cdot)$ is invertible. 
Finally $\bm{f}^{*}$ is $\mathrm{L}^2$.
Therefore, by the Carathéorody theorem,
System~\eqref{subeq:EqDiffmhenketaCoeffABC} has local
solutions. Therefore, there exists  $T^*>0$ and $\bm{m}_{n,k,\eta}$ in
$\mathrm{H}^1(\rbrack0,T^*\lbrack;(V_n)^3)$, $\bm{h}_{n,k,\eta}$ in
$\mathrm{H}^1(\rbrack0,T^*\lbrack;W_n)$ 
and $\bm{e}_{n,k,\eta}$ in $\mathrm{H}^1(\rbrack0,T^*\lbrack;W_n)$  that 
satisfy~\eqref{subeq:EqDiffmhenketa}
and~\eqref{subeq:EqDiffCondInitmhenketa}.

Multiplying~\eqref{subeq:EqDiffmhenketa} by test functions and
integrating by part yields:
\begin{subequations}\label{subeq:WeakFormulationGalerkin}
\begin{equation}\label{eq:WeakFormulationGalerkinMagnetization}
\begin{split}
&\phantom{=}
\alpha\iint_{Q_T}
\frac{\partial\bm{m}_{n,k,\eta}}{\partial t}\cdot\bm{\phi}\ud\bm{x}\ud t
+\iint_{Q_T}
	\left(\bm{m}_{n,k,\eta}\vect\frac{\partial\bm{m}_{n,k,\eta}}{\partial t}\right)
	\cdot\bm{\phi}\ud\bm{x}\ud t 
\\&=
-(1+\alpha^2) A\iint_{Q_T}\sum_{i=1}^3
	\frac{\partial \bm{m}_{n,k,\eta}}{\partial x_i}\cdot\frac{\partial\bm{\phi}}{\partial x_i}\ud\bm{x}\ud t
 \\&\phantom{=}
-(1+\alpha^2)\iint_{Q_T}(
 	\mathbf{K}(\bm{x}) \bm{m}_{n,k,\eta}(\bm{x}))
 	\cdot\bm{\phi}\ud\bm{x}\ud t
\\&\phantom{=}
+(1+\alpha^2)\iint_{Q_T}
	\bm{h}_{n,k,\eta}
	\cdot\bm{\phi}\ud\bm{x}\ud t					
\\&\phantom{=}
-(1+\alpha^2)k\iint_{Q_T}(\lVert\bm{m}_{n,k,\eta}\rVert^2-1)\bm{m}_{n,k,\eta}\cdot\bm{\phi}\ud\bm{x}\ud t
\\&\phantom{=}
        +(1+\alpha^2)\frac{K_s}{\eta}\iint_{\rbrack0,T\lbrack\cart(B\cart\rbrack-\eta,\eta\lbrack)}
        ((\bm{\nu}\cdot\bm{m}_{n,k,\eta})\bm{\nu}-\bm{m}_{n,k,\eta})
	\cdot\bm{\phi}\ud\bm{x}\ud t					
\\&\phantom{=}
+(1+\alpha^2) \frac{ J_1}{\eta}\iint_{\rbrack0,T\lbrack\cart(B\cart\rbrack-\eta,\eta\lbrack)}
	(\bm{m}^{*}_{n,k,\eta}-\bm{m}_{n,k,\eta})\cdot\bm{\phi}\ud\bm{x}\ud t		
\\&\phantom{=}
	+2(1+\alpha^2)\frac{J_2}{\eta}
		\iint_{\rbrack0,T\lbrack\cart(B\cart\rbrack-\eta,\eta\lbrack)}
	\left((\bm{m}_{n,k,\eta}\cdot\bm{m}_{n,k,\eta}^{*})\bm{m}^{*}_{n,k,\eta}
        -\lVert\bm{m}_{n,k,\eta}^*\rVert^2\bm{m}_{n,k,\eta}\right)
	\cdot\bm{\phi}\ud\bm{x}\ud t,
\end{split} 
\end{equation} 
for all $\bm{\phi}$  in $\mathcal{C}^\infty(\lbrack0,T^*\rbrack,V_n^3)$. And
\begin{equation}\label{eq:WeakFormulationGalerkinExcitation}
\begin{split} 
\mathrm{\mu}_0\iint_{\rbrack0,T\lbrack\cart\mathbb{R}^3}\left(\frac{\partial\bm{h}_{n,k,\eta}}{\partial t}+
\frac{\partial\bm{m}_{n,k,\eta}}{\partial t}\right)\cdot \bm{\psi}\ud\bm{x}\ud t
+\iint_{\rbrack0,T\lbrack\cart\mathbb{R}^3}\Rot\bm{e}_{n,k,\eta}\cdot\bm{\psi}\ud\bm{x}\ud t&=0,
\end{split}
\end{equation}
for all $\bm{\psi}$
in $\mathcal{C}^\infty(\lbrack0,T^*\rbrack,W_n)$. And
\begin{equation}\label{eq:WeakFormulationGalerkinElectric}
\begin{split} 
\mathrm{\varepsilon}_0\iint_{\rbrack0,T\lbrack\cart\mathbb{R}^3}\frac{\partial\bm{e}_{n,k,\eta}}{\partial
    t}\cdot\bm{\Theta}\ud\bm{x}\ud t-\iint_{\rbrack0,T\lbrack\cart\mathbb{R}^3}\Rot\bm{h}_{n,k,\eta}\cdot\bm{\Theta}\ud\bm{x}\ud
  t
&\\
+\sigma\iint_{Q_T}(\bm{e}_{n,k,\eta}+\bm{f})\cdot\bm{\Theta}\ud\bm{x}\ud t&=0,
\end{split}
\end{equation}
for all $\bm{\Theta}$ in
 $\mathcal{C}_c^\infty(\lbrack0,T^*\rbrack,W_n)$. 

By density, \eqref{subeq:WeakFormulationGalerkin} also
holds if  $\bm{\phi}$ belongs to $\mathrm{L}^2(\rbrack0,T^*\lbrack;V_n^3)$, $\bm{\psi}$
belongs to $\mathrm{L}^2(\rbrack0,T^*\lbrack,W_n)$, and $\bm{\Theta}$
belongs to $\mathrm{L}^2(\rbrack0,T^*\lbrack,W_n)$.
\end{subequations}
As in~\cite{Carbou.Fabrie:Time}, set
$\bm{\phi}=\frac{\partial\bm{m}_{n,k,\eta}}{\partial t}$ in~\eqref{eq:WeakFormulationGalerkinMagnetization},
we obtain 
\begin{equation*}
\begin{split}
&\phantom{=}\frac{A}{2}\int_{\Omega}\lVert\nabla\bm{m}_{n,k,\eta}(T,\bm{x})\rVert^2\ud\bm{x}
+\frac{1}{2}\int_{\Omega}(\mathbf{K}(\bm{x})\bm{m}_{n,k,\eta}(T,\bm{x}))\cdot\bm{m}(T,\bm{x})\ud\bm{x}
\\&\phantom{=}
+\frac{k}{4}\int_{\Omega}(\lVert\bm{m}_{n,k,\eta}(T,\bm{x}))\rVert^2-1)^2\ud\bm{x}
-\iint_{Q_T}\bm{h}_{n,k,\eta}\cdot\frac{\partial\bm{m}_{n,k,\eta}}{\partial
  t}\ud\bm{x}\ud t
\\&\phantom{=}
+\mathrm{E}_s^\eta(\bm{m}_{n,k,\eta}(T,\cdot))
+\frac{\alpha}{1+\alpha^2}\iint_{Q_T}\left\lVert\frac{\partial\bm{m}_{n,k,\eta}}{\partial
    t}\right\rVert^2 \ud\bm{x}\ud t\\
&\leq
\frac{A}{2}\int_{\Omega}\lVert\nabla\mathcal{P}_n(\bm{m}_0)\rVert^2\ud\bm{x}
+\frac{1}{2}\int_{\Omega}(\mathbf{K}(\bm{x}) \mathcal{P}_{V_n}(\bm{m}_0))\cdot\mathcal{P}_{V_n}(\bm{m}_0)\ud\bm{x}
\\&\phantom{=}
+\frac{k}{4}\int_{\Omega}(\lVert \mathcal{P}_{V_n}(\bm{m}_0))\rVert^2-1)^2\ud\bm{x}
+\mathrm{E}_s^\eta(\mathcal{P}_{V_n}(\bm{m}_0)).
\end{split}
\end{equation*}
Set $\bm{\psi}=\bm{h}_{n,k,\eta}$ in
\eqref{eq:WeakFormulationGalerkinExcitation}, we obtain
\begin{equation*}
\begin{split}
&\phantom{=}
\frac{\mathrm{\mu}_0}{2}\int_{\mathbb{R}^3}\lVert\bm{h}_{n,k,\eta}(T,\bm{x})\rVert^2 \ud\bm{x}\ud t
+\mu_0\iint_{Q_T}\frac{\partial\bm{m}_{n,k,\eta}}{\partial t}\cdot
\bm{h}_{n,k,\eta}\ud\bm{x}\ud t
\\&\phantom{=}
+\iint_{\rbrack0,T\lbrack\cart\mathbb{R}^3}\bm{h}_{n,k,\eta}\cdot\Rot\bm{e}_{n,k,\eta}\ud\bm{x}\ud t
\\&\leq \frac{\mathrm{\mu}_0}{2}\int_{\mathbb{R}^3}\lVert\mathcal{P}_{W_n}(\bm{h}_0)\rVert^2 \ud\bm{x}
,
\end{split}
\end{equation*}
Set $\bm{\Theta}=\bm{e}_{n,k,\eta}$ in \eqref{eq:WeakFormulationGalerkinElectric}, we obtain
\begin{equation*}
\begin{split}
&\phantom{=}\frac{\varepsilon_0}{2}\iint_{\mathbb{R}^3}\lVert\bm{e}_{n,k,\eta}(T,\cdot)\rVert^2
-\iint_{\rbrack0,T\lbrack\cart\mathbb{R}^3}\bm{e}_{n,k,\eta}\cdot\Rot\bm{h}_{n,k,\eta}\ud\bm{x}\ud
t
\\&\phantom{\leq}
+\sigma\iint_{\rbrack0,T\lbrack\cart\mathbb{R}^3}\lVert\bm{e}_{n,k,\eta}\rVert^2\ud\bm{x}\ud t
+\sigma\iint_{\rbrack0,T\lbrack\cart\mathbb{R}^3}\bm{f}\cdot\bm{e}_{n,k,\eta}\ud\bm{x}\ud t
\\&\leq 
\frac{\varepsilon_0}{2}\iint_{\mathbb{R}^3}\lVert\mathcal{P}_{W_N}(\bm{e}_0)\rVert^2\ud\bm{x}.
\end{split}
\end{equation*}

Combining these three inequalities, we get an energy inequality
\begin{equation}\label{eq:EnergyInequalityGalerkin}
\begin{split}
&\phantom{=}\frac{A}{2}\int_{\Omega}\lVert\nabla\bm{m}_{n,k,\eta}(T,\cdot)\rVert^2\ud\bm{x}
+\frac{1}{2}\int_{\Omega}(\mathbf{K}(\bm{x})\bm{m}_{n,k,\eta}(T,\bm{x}))\cdot\bm{m}_{n,k,\eta}(T,\bm{x})\ud\bm{x}
\\&\phantom{=}
+\frac{k}{4}\int_{\Omega}(\lVert\bm{m}_{n,k,\eta}(T,\bm{x}))\rVert^2-1)^2\ud\bm{x}
\\&\phantom{=}
+\frac{\varepsilon_0}{2\mu_0}\int_{\mathbb{R}^3}\lVert\bm{e}_{n,k,\eta}(T,\bm{x})\rVert^2\ud\bm{x}
+\frac{1}{2}\int_{\mathbb{R}^3}\lVert\bm{h}_{n,k,\eta}(T,\bm{x})\rVert^2\ud\bm{x}
\\&\phantom{=}
+\mathrm{E}_s^\eta(\bm{m}_{n,k,\eta}(T,\cdot))
+\frac{\alpha}{1+\alpha^2}\iint_{Q_T}\left\lVert\frac{\partial\bm{m}_{n,k,\eta}}{\partial
    t}\right\rVert^2 \ud\bm{x}\ud t\\
&\phantom{\leq}+\frac{\sigma}{\mu_0}\iint_{\rbrack0,T\lbrack\cart\mathbb{R}^3}\lVert\bm{e}_{n,k,\eta}\rVert^2\ud\bm{x}\ud t
+\frac{\sigma}{\mu_0}\iint_{\rbrack0,T\lbrack\cart\mathbb{R}^3}\bm{f}\cdot\bm{e}_{n,k,\eta}\ud\bm{x}\ud t
\\&\leq
\frac{A}{2}\int_{\Omega}\lVert\nabla\mathcal{P}_{V_n}(\bm{m}_0)\rVert^2\ud\bm{x}
+\frac{1}{2}\int_{\Omega}(\mathbf{K}(\bm{x}) \mathcal{P}_{V_n}(\bm{m}_0))\cdot\mathcal{P}_{V_n}(\bm{m}_0)\ud\bm{x}
\\&\phantom{=}
+\frac{k}{4}\int_{\Omega}(\lVert \mathcal{P}_{V_n}(\bm{m}_0)\rVert^2-1)^2\ud\bm{x}
+\mathrm{E}_s^\eta(\mathcal{P}_{V_n}(\bm{m}_0))
\\&\phantom{=}
+\frac{\varepsilon_0}{2\mu_0}\int_{\mathbb{R}^3}\lVert\mathcal{P}_{W_N}(\bm{e}_0)\rVert^2\ud\bm{x}
+\frac{1}{2}\int_{\mathbb{R}^3}\lVert\mathcal{P}_{W_N}(\bm{h}_0)\rVert^2\ud\bm{x}
\end{split}
\end{equation}
The projection $\mathcal{P}_n(\bm{m}_0)$ converges to $\bm{m}_0$ 
in $\mathbb{H}^1(\Omega)$ and in $\mathbb{L}^6(\Omega)$
by Sobolev imbedding.
The terms on the right hand-side remain bounded independently of
$n$. The last term on the left hand-side may be dealt with by Young
inequality. Thus,  $\bm{m}_{n,k,\eta}$, $\bm{h}_{n,k,\eta}$ and $\bm{e}_{n,k,\eta}$  
cannot explode in finite time and exist globally. 

\subsection{Final step of Galerkin's method}\label{subsect:nLimit}
We now  have $n$ tend to $+\infty$
By~\eqref{eq:EnergyInequalityGalerkin} and using Young inequality to
deal with the term containing $\bm{f}$:
\begin{itemize}
\item $\bm{m}_{n,k,\eta}$  is bounded in $\mathrm{L}^\infty(\mathbb{R}^+;\mathbb{L}^4(\Omega))$ independently
of $n$.
\item $\nabla \bm{m}_{n,k,\eta}$ is bounded in $\mathrm{L}^\infty(\mathbb{R}^+;\mathbb{L}^2(\Omega))$ independently
of $n$.
\item $\frac{\partial\bm{m}_{n,k,\eta}}{\partial t}$ is bounded in $\mathrm{L}^2(\mathbb{R}^+;\mathbb{L}^2(\Omega))$ independently
of $n$.
\item $\bm{h}_{n,k,\eta}$  is bounded in $\mathrm{L}^\infty(\mathbb{R}^+;\mathbb{L}^2(\Omega))$ independently
of $n$.
\item $\bm{e}_{n,k,\eta}$  is bounded in $\mathrm{L}^\infty(\mathbb{R}^+;\mathbb{L}^2(\Omega))$ independently
of $n$.
\end{itemize}
Thus, there exist $\bm{m}_{k,\eta}$ in
$\mathrm{H}_{loc}^1(\lbrack0,+\infty\lbrack;\mathbb{L}^2(\Omega))\cap
\mathrm{L}^\infty(0,+\infty;\mathbb{H}^1(\Omega))$, 
$\bm{h}_{k,\eta}$ in $\mathrm{L}^\infty(\mathbb{R}^+;\mathbb{L}^2(\Omega))$, 
$\bm{e}_{k,\eta}$ in $\mathrm{L}^\infty(\mathbb{R}^+;\mathbb{L}^2(\Omega))$, 
such that up to a subsequence:
\begin{itemize}
\item $\bm{m}_{n,k,\eta}$ converges weakly to $\bm{m}_{k,\eta}$ in
 $\mathbb{H}^1(\rbrack0,T\lbrack\cart\Omega)$.
\item $\bm{m}_{n,k,\eta}$ converges strongly to $\bm{m}_{k,\eta}$ in
$\mathbb{L}^2(\rbrack0,T\lbrack\cart\Omega)$. 
\item $\bm{m}_{n,k,\eta}$ converges strongly to $\bm{m}_{k,\eta}$ in
$\mathcal{C}(\lbrack0,T\rbrack;\mathbb{L}^2(\Omega))$ and thus in
$\mathcal{C}(\lbrack0,T\rbrack;\mathbb{L}^p(\Omega))$ for all $1\leq p<6$.
\item $\nabla\bm{m}_{n,k,\eta}$ converges weakly to $\nabla\bm{m}_{k,\eta}$ in
$\mathbb{L}^2(\rbrack0,T\lbrack\cart\Omega)$.
\item For all time $T$, $\nabla\bm{m}_{n,k,\eta}(T,\cdot)$ converges
  weakly to
$\nabla \bm{m}_{k,\eta}(T,\cdot)$ in
$\mathbb{L}^2(\Omega)$. The same subsequence can be used for all time
$T\geq 0$, see Lemma~\ref{lemma:SameSubsequenceGivenTime}.
\item $\frac{\partial\bm{m}_{n,k,\eta}}{\partial t}$ converges star weakly to
  $\frac{\partial\bm{m}_{k,\eta}}{\partial t}$ in
$\mathrm{L}^\infty(\mathbb{R}^+;\mathbb{L}^2(\Omega))$.
\item $\bm{h}_{n,k,\eta}$ converges star weakly to $\bm{h}_{k,\eta}$ in $\mathrm{L}^\infty(\mathbb{R}^+;\mathbb{L}^2(\Omega))$.
\item $\bm{e}_{n,k,\eta}$ converges star weakly to $\bm{e}_{k,\eta}$ in $\mathrm{L}^\infty(\mathbb{R}^+;\mathbb{L}^2(\Omega))$.
\end{itemize}
Moreover, by Aubin's lemma, see~\cite{Aubin:TheoremeCompacite}, $\bm{m}_{n,k,\eta}$ converges strongly to 
$\bm{m}_{k,\eta}$ in $\mathrm{L}^p(\mathbb{R}^+;\mathrm{L}^q(\Omega))$
for $1\leq p<+\infty$ and $1\leq q <6$.

Taking the limit in the energy
inequality~\eqref{eq:EnergyInequalityGalerkin} as $n$ tend to
$+\infty$ is tricky:  the terms
involving the $\mathbb{L}^2(\Omega)$ norm of $\bm{e}_{n,k,\eta}(T,\cdot)$ and
$\bm{h}_{n,k,\eta}(T,\cdot)$ are tricky. For all $T>0$, we can extract
a subsequence of $\bm{e}_{n,k,\eta}(T,\cdot)$ that converges weakly to
$\bm{e}^T_{k,\eta}$  in $L^2(\Omega)$ as $n$ tends to $+\infty$. The
tricky part is that it is unproven
that $\bm{e}^T_{k,\eta}$ is equal to $\bm{e}_{k,\eta}(T,\cdot)$. If we
had strong convergence of $\bm{e}_{n,k,\eta}$ as a function defined on
$\mathbb{R}^+\cart\Omega$ or if we had the
existence of a subsequence along which $\bm{e}_{n,k,\eta}(T,\cdot)$
converged weakly in $L^2(\Omega)$ for almost all time $T$, then we
could conclude directly.  Unfortunately, while we have for all $T>0$,
the existence of a subsequence of $\bm{e}_{n,k,\eta}(T,\cdot)$ that
converges weakly in $L^2(\Omega)$, the subsequence depends on $T$. 
We have the same problem for $\bm{h}_{n,k,\eta}$.  There's no such
problem with $\bm{m}(T,\cdot)$,
see Lemma~\ref{lemma:SameSubsequenceGivenTime}. To solve the problem, 
we first integrate ~\eqref{eq:EnergyInequalityGalerkin} over $\rbrack
T_1,T_2\lbrack$ where $0\leq T_1<T_2<+\infty$ then we can
take the limit as $n$ tend to $+\infty$:
\begin{equation*}
\begin{split}
&\phantom{=}\frac{A}{2}\int_{T_1}^{T_2}\int_{\Omega}\lVert\nabla\bm{m}_{k,\eta}(T,\cdot)\rVert^2\ud\bm{x}\ud T
+\frac{1}{2}\int_{T_1}^{T_2}\int_{\Omega}(\mathbf{K}(\bm{x})\bm{m}_{k,\eta}(T,\bm{x}))\cdot\bm{m}_{k,\eta}(T,\bm{x})\ud\bm{x}\ud T
\\&\phantom{=}
+\frac{k}{4}\int_{T_1}^{T_2}\int_{\Omega}(\lVert\bm{m}_{k,\eta}(T,\bm{x}))\rVert^2-1)^2\ud\bm{x}
\\&\phantom{=}
+\frac{\varepsilon_0}{2\mu_0}\int_{T_1}^{T_2}\int_{\mathbb{R}^3}\lVert\bm{e}_{k,\eta}(T,\bm{x})\rVert^2\ud\bm{x}
+\frac{1}{2}\int_{T_1}^{T_2}\int_{\mathbb{R}^3}\lVert\bm{h}_{k,\eta}(T,\bm{x})\rVert^2\ud\bm{x}
\\&\phantom{=}
+\int_{T_1}^{T_2}\mathrm{E}_s^\eta(\bm{m}_{k,\eta}(T,\cdot))
+\frac{\alpha}{1+\alpha^2}\int_{T_1}^{T_2}\iint_{Q_T}\left\lVert\frac{\partial\bm{m}_{k,\eta}}{\partial
    t}\right\rVert^2 \ud\bm{x}\ud t\\
&\phantom{\leq}+\frac{\sigma}{\mu_0}\int_{T_1}^{T_2}\iint_{\rbrack0,T\lbrack\cart\mathbb{R}^3}\lVert\bm{e}_{k,\eta}\rVert^2\ud\bm{x}\ud t
+\frac{\sigma}{\mu_0}\int_{T_1}^{T_2}\iint_{\rbrack0,T\lbrack\cart\mathbb{R}^3}\bm{f}\cdot\bm{e}_{k,\eta}\ud\bm{x}\ud t
\\&\leq
\frac{A}{2}\int_{T_1}^{T_2}\int_{\Omega}\lVert\nabla\bm{m}_0\rVert^2\ud\bm{x}
+\frac{1}{2}\int_{T_1}^{T_2}\int_{\Omega}(\mathbf{K}(\bm{x}) \bm{m}_0)\cdot\bm{m}_0\ud\bm{x}
\\&\phantom{=}
+\int_{T_1}^{T_2}\mathrm{E}_s^\eta(\bm{m}_0)
+\frac{\varepsilon_0}{2\mu_0}\int_{T_1}^{T_2}\int_{\mathbb{R}^3}\lVert\bm{e}_0\rVert^2\ud\bm{x}
+\frac{1}{2}\int_{T_1}^{T_2}\int_{\mathbb{R}^3}\lVert\bm{h}_0\rVert^2\ud\bm{x},
\end{split}
\end{equation*}
for all $0\leq T_1<T_2<+\infty$. Since the equality holds for all
$T_1$ and $T_2$, we have for almost all $T>0$
\begin{equation}\label{eq:EnergyInequalityPenalized}
\begin{split}
&\phantom{=}\frac{A}{2}\int_{\Omega}\lVert\nabla\bm{m}_{k,\eta}(T,\cdot)\rVert^2\ud\bm{x}\ud T
+\frac{1}{2}\int_{\Omega}(\mathbf{K}(\bm{x})\bm{m}_{k,\eta}(T,\bm{x}))\cdot\bm{m}_{k,\eta}(T,\bm{x})\ud\bm{x}\ud T
\\&\phantom{=}
+\frac{k}{4}\int_{\Omega}(\lVert\bm{m}_{k,\eta}(T,\bm{x}))\rVert^2-1)^2\ud\bm{x}
\\&\phantom{=}
+\frac{\varepsilon_0}{2\mu_0}\int_{\mathbb{R}^3}\lVert\bm{e}_{k,\eta}(T,\bm{x})\rVert^2\ud\bm{x}
+\frac{1}{2}\int_{\mathbb{R}^3}\lVert\bm{h}_{k,\eta}(T,\bm{x})\rVert^2\ud\bm{x}
\\&\phantom{=}
+\mathrm{E}_s^\eta(\bm{m}_{k,\eta}(T,\cdot))
+\frac{\alpha}{1+\alpha^2}\iint_{Q_T}\left\lVert\frac{\partial\bm{m}_{k,\eta}}{\partial
    t}\right\rVert^2 \ud\bm{x}\ud t\\
&\phantom{\leq}+\frac{\sigma}{\mu_0}\iint_{\rbrack0,T\lbrack\cart\mathbb{R}^3}\lVert\bm{e}_{k,\eta}\rVert^2\ud\bm{x}\ud t
+\frac{\sigma}{\mu_0}\iint_{\rbrack0,T\lbrack\cart\mathbb{R}^3}\bm{f}\cdot\bm{e}_{k,\eta}\ud\bm{x}\ud t
\\&\leq
\frac{A}{2}\int_{\Omega}\lVert\nabla\bm{m}_0\rVert^2\ud\bm{x}
+\frac{1}{2}\int_{\Omega}(\mathbf{K}(\bm{x}) \bm{m}_0)\cdot\bm{m}_0\ud\bm{x}
\\&\phantom{=}
+\mathrm{E}_s^\eta(\bm{m}_0)
+\frac{\varepsilon_0}{2\mu_0}\int_{\mathbb{R}^3}\lVert\bm{e}_0\rVert^2\ud\bm{x}
+\frac{1}{2}\int_{\mathbb{R}^3}\lVert\bm{h}_0\rVert^2\ud\bm{x},
\end{split}
\end{equation}

We take the limit in~\eqref{eq:WeakFormulationGalerkinMagnetization}
as $n$ tends to $+\infty$:
\begin{subequations}\label{subeq:WeakFormulationPenalized}
\begin{equation}\label{eq:WeakFormulationPenalizedMagnetization}
\begin{split}
&\phantom{=}
\iint_{Q_T}
\alpha\frac{\partial\bm{m}_{k,\eta}}{\partial t}\cdot\bm{\phi}\ud\bm{x}\ud t
+\iint_{Q_T}
	\left(\bm{m}_{k,\eta}\vect\frac{\partial\bm{m}_{k,\eta}}{\partial t}\right)
	\cdot\bm{\phi}\ud\bm{x}\ud t 
\\&=
-(1+\alpha^2) A\iint_{Q_T}\sum_{i=1}^3
	\frac{\partial \bm{m}_{k,\eta}}{\partial x_i}\cdot\frac{\partial\bm{\phi}}{\partial x_i}\ud\bm{x}\ud t
 \\&\phantom{=}
-(1+\alpha^2)\iint_{Q_T}
	(\mathbf{K}(\bm{x}) \bm{m}_{k,\eta}(t,\bm{x}))
	\cdot\bm{\phi}(t,\bm{x})\ud\bm{x}\ud t
\\&\phantom{=}
+(1+\alpha^2)\iint_{Q_T}
	\bm{h}_{k,\eta}
	\cdot\bm{\phi}\ud\bm{x}\ud t					
\\&\phantom{=}
        +(1+\alpha^2)\frac{K_s}{\eta}\iint_{\rbrack0,T\lbrack\cart(B\cart\rbrack-\eta,\eta\lbrack)}
        ((\bm{\nu}\cdot\bm{m}_{k,\eta})\bm{\nu}-\bm{m}_{k,\eta})
	\cdot\bm{\phi}\ud\bm{x}\ud t					
\\&\phantom{=}
+(1+\alpha^2) \frac{ J_1}{\eta}\iint_{\rbrack0,T\lbrack\cart(B\cart\rbrack-\eta,\eta\lbrack)}
	(\bm{m}^{*}_{k,\eta}-\bm{m}_{k,\eta})\cdot\bm{\phi}\ud\bm{x}\ud t		
\\&\phantom{=}
	+2(1+\alpha^2)\frac{J_2}{\eta}
		\iint_{\rbrack0,T\lbrack\cart(B\cart\rbrack-\eta,\eta\lbrack)}
	\left((\bm{m}_{k,\eta}\cdot\bm{m}_{k,\eta}^{*})\bm{m}^{*}_{n,k,\eta}
        -\lVert\bm{m}_{k,\eta}^*\rVert^2\bm{m}_{k,\eta}\right)
	\cdot\bm{\phi}\ud\bm{x}\ud t,
\end{split} 
\end{equation}
for all $\bm{\phi}$ in $\bigcup_n\mathcal{C}^\infty(\lbrack0,T\lbrack;V_n^3)$. By
density, it also holds for all $\bm{\phi}$ in
$\mathbb{H}^1(\rbrack0,T\lbrack\cart\Omega)$.
We integrate~\eqref{eq:WeakFormulationGalerkinExcitation} by parts then take the limit as
$n$ tends to $+\infty$. 
\begin{equation}\label{eq:WeakFormulationPenalizedExcitation}
\begin{split} 
&\phantom{=}-\mathrm{\mu}_0\iint_{\mathbb{R}^+\cart\mathbb{R}^3}\left(\bm{h}_{k,\eta}+
\bm{m}_{k,\eta}\right)) \frac{\partial\bm{\psi}}{\partial t}\ud\bm{x}\ud t
+\iint_{\mathbb{R}^+\cart\mathbb{R}^3}\bm{e}_{k,\eta}\cdot\Rot\bm{\psi}\ud\bm{x}\ud
t\\
&=\mathrm{\mu}_0\int_{\mathbb{R}^3}\left(\bm{h}_{0}+
\bm{m}_{0}\right))\cdot\bm{\psi}(0,\cdot)\ud\bm{x},
\end{split}
\end{equation}
for all $\bm{\psi}$ in $\bigcup_n\mathcal{C}_c^\infty([0,+\infty\lbrack;W_n)$. By 
density, it also holds for all $\bm{\psi}$ in
$\mathrm{L}^1(\mathbb{R}^+;\mathbb{H}^1(\Omega))$ such that
$\frac{\partial \bm{\psi}}{\partial t}$ belongs to $\mathrm{L}^1(\mathbb{R}^+;\mathbb{H}^1(\Omega))$.
We integrate~\eqref{eq:WeakFormulationGalerkinElectric} by parts then take the limit as
$n$ tends to $+\infty$. 
\begin{equation}\label{eq:WeakFormulationPenalizedElectric}
\begin{split} 
&\phantom{=}
-\mathrm{\varepsilon}_0\iint_{\mathbb{R}^+\cart\mathbb{R}^3}\bm{e}_{k,\eta}\cdot \frac{\partial\bm{\Theta}}{\partial
    t}\ud\bm{x}\ud t-\iint_{\mathbb{R}^+\cart\mathbb{R}^3}\bm{h}_{k,\eta}\cdot \Rot\bm{\Theta}\ud\bm{x}\ud
  t
\\&\phantom{=}
+\sigma\iint_{\mathbb{R}^+\cart\Omega}(\bm{e}_{k,\eta}+\bm{f})\cdot\bm{\Theta}\ud\bm{x}\ud t
\\&=
\mathrm{\varepsilon}_0\int_{\mathbb{R}^3}\bm{e}_{0}\cdot\bm{\Theta}(0,\cdot)\ud\bm{x},
\end{split}
\end{equation}
for all $\bm{\Theta}$ in $\bigcup_n\mathcal{C}_c^\infty([0,+\infty\lbrack;W_n)$. By
density, it also holds for all $\bm{\Theta}$ in
$\mathrm{L}^1(\mathbb{R}^+;\mathbb{H}^1(\Omega))$ such that
$\frac{\partial \bm{\Theta}}{\partial t}$ belongs to $\mathrm{L}^1(\mathbb{R}^+;\mathbb{H}^1(\Omega))$.
\end{subequations}

\subsection{Limit as $k$ tends to $+\infty$}\label{subsect:kLimit}
By~\eqref{eq:EnergyInequalityPenalized} and using Young inequality to
deal with the term containing $\bm{f}$:
\begin{itemize}
\item $\bm{m}_{k,\eta}$  is bounded in in $\mathrm{L}^\infty(\mathbb{R}^+;\mathbb{L}^4(\Omega))$ independently
of $n$.
\item $\nabla \bm{m}_{k,\eta}$ is bounded in $\mathrm{L}^\infty(\mathbb{R}^+;\mathbb{L}^2(\Omega))$ independently
of $n$.
\item $\frac{\partial\bm{m}_{k,\eta}}{\partial t}$ is bounded in $\mathrm{L}^2(\mathbb{R}^+;\mathrm{L}^2(\Omega))$ independently
of $n$.
\item $\bm{h}_{k,\eta}$  is bounded in in $\mathrm{L}^\infty(\mathbb{R}^+;\mathbb{L}^2(\Omega))$ independently
of $n$.
\item $\bm{e}_{k,\eta}$  is bounded in in $\mathrm{L}^\infty(\mathbb{R}^+;\mathbb{L}^2(\Omega))$ independently
of $n$.
\item $k(\lVert\bm{m}_{k,\eta}\rVert^2-1)$  is bounded in in $\mathrm{L}^\infty(\mathbb{R}^+;\mathbb{L}^2(\Omega))$ independently
of $n$.
\end{itemize}
Thus, there exist $\bm{m}_{\eta}$, $\bm{h}_{\eta}$, $\bm{e}_{\eta}$, 
such that up to a subsequence:
\begin{itemize}
\item $\bm{m}_{k,\eta}$ converges weakly to $\bm{m}_{\eta}$ in
 $\mathbb{H}^1(\rbrack0,T\lbrack\cart\Omega)$.
\item $\bm{m}_{k,\eta}$ converges strongly to $\bm{m}_{\eta}$ in
$\mathbb{L}^2(\rbrack0,T\lbrack\cart\Omega)$. 
\item $\bm{m}_{k,\eta}$ converges strongly to $\bm{m}_{\eta}$ in
$\mathcal{C}(\lbrack0,T\rbrack;\mathbb{L}^2(\Omega))$ and thus in
$\mathcal{C}(\lbrack0,T\rbrack;\mathbb{L}^p(\Omega))$ for all $1\leq p<6$.
\item $\nabla\bm{m}_{k,\eta}$ converges weakly to $\nabla\bm{m}_{\eta}$ in
$\mathbb{L}^2(\rbrack0,T\lbrack\cart\Omega)$.
\item For all time $T$, $\nabla\bm{m}_{k,\eta}(T,\cdot)$ converges weakly to $\nabla\bm{m}_{\eta}(t,\cdot)$ in
$\mathbb{L}^2(\Omega)$.
\item $\frac{\partial\bm{m}_{k,\eta}}{\partial t}$ converges star weakly to
  $\frac{\partial\bm{m}_{\eta}}{\partial t}$ in $\mathbb{L}^\infty(\mathbb{R}^+;\mathbb{L}^2(\Omega))$.
\item $\bm{h}_{k,\eta}$ converges star weakly to $\bm{h}_{\eta}$ in $\mathrm{L}^\infty(\mathbb{R}^+;\mathbb{L}^2(\Omega))$.
\item $\bm{e}_{k,\eta}$ converges star weakly to $\bm{e}_{\eta}$ in $\mathrm{L}^\infty(\mathbb{R}^+;\mathbb{L}^2(\Omega))$.
\end{itemize}
Moreover, by Aubin's lemma $\bm{m}_{\eta}$ converges strongly to 
$\bm{m}_{\eta}$ in $\mathrm{L}^p(\mathbb{R}^+;\mathbb{L}^q(\Omega))$
for $1\leq q<+\infty$ and $1\leq q <6$. 
Since $\lVert\bm{m}_{k,\eta}\rVert^2-1$ converges to $0$, therefore 
$\lVert\bm{m}_\eta\rVert=1$ almost everywhere on $\mathbb{R}^+\cart\Omega$.

For the reasons explained in \S\ref{subsect:nLimit},
we integrate~\eqref{eq:EnergyInequalityPenalized} over $\lbrack T_1,T_2\rbrack$, drop the term
$k\lVert\lVert\bm{m}_{\eta}\rVert^2-1\rVert_{\mathrm{L}^2}^2/4$, and
compute the limit as $k$ tends to $+\infty$. After the limit is taken,
we drop the integral over $\lbrack T_1,T_2\rbrack$ and obtain that for
almost all $T>0$:
\begin{equation}\label{eq:EnergyInequalityLayer}
\begin{split}
&\phantom{=}\frac{A}{2}\int_{\Omega}\lVert\nabla\bm{m}_{\eta}(T,\cdot)\rVert^2\ud\bm{x}
+\frac{1}{2}\int_{\Omega}(\mathbf{K}(\bm{x})\bm{m}_{\eta}(T,\bm{x}))\cdot\bm{m}_{\eta}(T,\bm{x})\ud\bm{x}
\\&\phantom{=}
+\frac{\varepsilon_0}{2\mu_0}\int_{\mathbb{R}^3}\lVert\bm{e}_{\eta}(T,\bm{x})\rVert^2\ud\bm{x}
+\frac{1}{2}\int_{\mathbb{R}^3}\lVert\bm{h}_{\eta}(T,\bm{x})\rVert^2\ud\bm{x}
\\&\phantom{=}
+\mathrm{E}_s^\eta(\bm{m}_\eta(T,\cdot))
+\frac{\alpha}{1+\alpha^2}\iint_{Q_T}\left\lVert\frac{\partial\bm{m}_{\eta}}{\partial
    t}\right\rVert^2 \ud\bm{x}\ud t\\
&\phantom{\leq}+\frac{\sigma}{\mu_0}\iint_{\rbrack0,T\lbrack\cart\mathbb{R}^3}\lVert\bm{e}_{\eta}\rVert^2\ud\bm{x}\ud t
+\frac{\sigma}{\mu_0}\iint_{\rbrack0,T\lbrack\cart\mathbb{R}^3}\bm{f}\cdot\bm{e}_{\eta}\ud\bm{x}\ud t
\\&\leq
\frac{A}{2}\int_{\Omega}\lVert\nabla\bm{m}_0\rVert^2\ud\bm{x}
+\frac{1}{2}\int_{\Omega}(\mathbf{K}(\bm{x}) \bm{m}_0)\cdot\bm{m}_0\ud\bm{x}
\\&\phantom{=}
+\mathrm{E}_s^\eta(\bm{m}_0)
+\frac{\varepsilon_0}{2\mu_0}\int_{\mathbb{R}^3}\lVert\bm{e}_0\rVert^2\ud\bm{x}
+\frac{1}{2}\int_{\mathbb{R}^3}\lVert\bm{h}_0\rVert^2\ud\bm{x}. 
\end{split}
\end{equation}

We replace $\bm{\phi}$
in~\eqref{eq:WeakFormulationPenalizedMagnetization}
with $\bm{m}_{k,\eta}\vect\bm{\varphi}$ where $\bm{\varphi}$ 
is $\mathcal{C}^\infty_c(\mathbb{R}^+\cart\Omega)$:
\begin{equation*}
\begin{split}
 &\phantom{=}
 -\alpha\iint_{Q_T}
\left(\bm{m}_{k,\eta}\vect\frac{\partial\bm{m}_{k,\eta}}{\partial  t}\right)\cdot\bm{\varphi}\ud\bm{x}\ud t
+\iint_{Q_T}
	\lVert\bm{m}_{k,\eta}\rVert^2\frac{\partial\bm{m}_{k,\eta}}{\partial t}\cdot\bm{\varphi}\ud\bm{x}\ud t 
\\&=\iint_{Q_T}
	\left(\bm{m}_{k,\eta}\cdot\frac{\partial\bm{m}_{k,\eta}}{\partial t}\right)(\bm{m}_{k,\eta}\cdot\bm{\varphi})\ud\bm{x}\ud t 
\\&\phantom{=}
+(1+\alpha^2) A\iint_{Q_T}\sum_{i=1}^3
	\left(\bm{m}_{k,\eta}\vect\frac{\partial \bm{m}_{k,\eta}}{\partial x_i}\right)
        \cdot\frac{\partial\bm{\varphi}}{\partial x_i}\ud\bm{x}\ud t
\\&\phantom{=}
+(1+\alpha^2)\iint_{Q_T}
	\left(\bm{m}_{k,\eta}(t,\bm{x})\vect\mathbf{K}(\bm{x}) \bm{m}_{k,\eta}(t,\bm{x})\right)
	\cdot\bm{\varphi}(t,\bm{x})\ud\bm{x}\ud t
\\&\phantom{=}
-(1+\alpha^2)\iint_{Q_T}
	\left(\bm{m}_{k,\eta}\vect\bm{h}_{k,\eta}\right)
	\cdot\bm{\varphi}\ud\bm{x}\ud t					
\\&\phantom{=}
        -(1+\alpha^2)\frac{K_s}{\eta}\iint_{\rbrack0,T\lbrack\cart(B\cart\rbrack-\eta,\eta\lbrack)}
        (\bm{\nu}\cdot\bm{m}_{k,\eta})(\bm{m}_{k,\eta}\vect\bm{\nu})
	\cdot\bm{\varphi}\ud\bm{x}\ud t					
\\&\phantom{=}
-(1+\alpha^2) \frac{ J_1}{\eta}\iint_{\rbrack0,T\lbrack\cart(B\cart\rbrack-\eta,\eta\lbrack)}
	(\bm{m}_{k,\eta}\vect \bm{m}^{*}_{k,\eta})\cdot\bm{\varphi}\ud\bm{x}\ud t		
\\&\phantom{=}
	-2(1+\alpha^2)\frac{J_2}{\eta}
		\iint_{\rbrack0,T\lbrack\cart(B\cart\rbrack-\eta,\eta\lbrack)}
	(\bm{m}_{k,\eta}\cdot\bm{m}_{k,\eta}^{*})(\bm{m}_{k,\eta}\vect\bm{m}^{*}_{k,\eta})
	\cdot\bm{\varphi}\ud\bm{x}\ud t,
\end{split} 
\end{equation*}

We then take the
limit as $k$ tends to $+\infty$:
\begin{subequations}
\begin{equation}\label{eq:WeakFormulationLayerMagnetization}
\begin{split}
&\phantom{=}
 -\alpha\iint_{Q_T}
\left(\bm{m}_{\eta}\vect\frac{\partial\bm{m}_{\eta}}{\partial  t}\right)\cdot\bm{\varphi}\ud\bm{x}\ud t
+\iint_{Q_T}
	\frac{\partial\bm{m}_{\eta}}{\partial t}\cdot\bm{\varphi}\ud\bm{x}\ud t 
\\&=
+(1+\alpha^2) A\iint_{Q_T}\sum_{i=1}^3
	\left(\bm{m}_{\eta}\vect\frac{\partial \bm{m}_{\eta}}{\partial x_i}\right)
        \cdot\frac{\partial\bm{\varphi}}{\partial x_i}\ud\bm{x}\ud t
\\&\phantom{=}
+(1+\alpha^2)\iint_{Q_T}
	\left(\bm{m}_{\eta}(t,\bm{x})\vect\mathbf{K}(\bm{x}) \bm{m}_{\eta}(t,\bm{x})\right)
	\cdot\bm{\varphi}(t,\bm{x})\ud\bm{x}\ud t
\\&\phantom{=}
-(1+\alpha^2)\iint_{Q_T}
	\left(\bm{m}_{\eta}\vect\bm{h}_{\eta}\right)
	\cdot\bm{\varphi}\ud\bm{x}\ud t					
\\&\phantom{=}
        -(1+\alpha^2)\frac{K_s}{\eta}\iint_{\rbrack0,T\lbrack\cart(B\cart\rbrack-\eta,\eta\lbrack)}
        (\bm{\nu}\cdot\bm{m}_{\eta})(\bm{m}_{\eta}\vect\bm{\nu})
	\cdot\bm{\varphi}\ud\bm{x}\ud t					
\\&\phantom{=}
-(1+\alpha^2) \frac{ J_1}{\eta}\iint_{\rbrack0,T\lbrack\cart(B\cart\rbrack-\eta,\eta\lbrack)}
	(\bm{m}_{\eta}\vect \bm{m}^{*}_{\eta})\cdot\bm{\varphi}\ud\bm{x}\ud t		
\\&\phantom{=}
	-2(1+\alpha^2)\frac{J_2}{\eta}
		\iint_{\rbrack0,T\lbrack\cart(B\cart\rbrack-\eta,\eta\lbrack)}
	(\bm{m}_{\eta}\cdot\bm{m}_{\eta}^{*})(\bm{m}_{\eta}\vect\bm{m}^{*}_{\eta})
	\cdot\bm{\varphi}\ud\bm{x}\ud t,
\end{split} 
\end{equation}

We take the limit in~\eqref{eq:WeakFormulationPenalizedExcitation} as
$k$ tends to $+\infty$:
\begin{equation}\label{eq:WeakFormulationLayerExcitation}
\begin{split} 
&\phantom{=}-\mathrm{\mu}_0\iint_{\mathbb{R}^+\cart\mathbb{R}^3}\left(\bm{h}_{\eta}+
\bm{m}_{\eta}\right)) \frac{\partial\bm{\psi}}{\partial t}\ud\bm{x}\ud t
+\iint_{\mathbb{R}^+\cart\mathbb{R}^3}\bm{e}_{\eta}\Rot\bm{\psi}\ud\bm{x}\ud
t\\
&=\mathrm{\mu}_0\int_{\mathbb{R}^3}\left(\bm{h}_{0}+
\bm{m}_{0}\right))\cdot\bm{\psi}(0,\cdot)\ud\bm{x}
\end{split}
\end{equation}
for all $\bm{\psi}$ in
$\mathrm{L}^1(\mathbb{R}^+;\mathbb{H}^1(\Omega))$ such that
$\frac{\partial \bm{\psi}}{\partial t}$ belongs to $\mathrm{L}^1(\mathbb{R}^+;\mathbb{L}^2(\Omega))$. 

We  take the limit in~\eqref{eq:WeakFormulationPenalizedElectric} as
$k$ tends to $+\infty$. 
\begin{equation}\label{eq:WeakFormulationLayerElectric}
\begin{split} 
&\phantom{=}
-\mathrm{\varepsilon}_0\iint_{\mathbb{R}^+\cart\mathbb{R}^3}\bm{e}_{\eta}\cdot \frac{\partial\bm{\Theta}}{\partial
    t}\ud\bm{x}\ud t-\iint_{\mathbb{R}^+\cart\mathbb{R}^3}\bm{h}_{\eta}\cdot \Rot\bm{\Theta}\ud\bm{x}\ud
  t
\\&\phantom{=}
+\sigma\iint_{\mathbb{R}^+\cart\Omega}(\bm{e}_\eta+\bm{f})\cdot\bm{\Theta}\ud\bm{x}\ud t
\\&=
\mathrm{\varepsilon}_0\int_{\mathbb{R}^3}\bm{e}_{0}\cdot\bm{\Theta}(0,\cdot)\ud\bm{x},
\end{split}
\end{equation}
for all $\bm{\Theta}$ in in
$\mathrm{L}^1(\mathbb{R}^+;\mathbb{H}^1(\Omega))$ such that
$\frac{\partial \bm{\Theta}}{\partial t}$ belongs to $\mathrm{L}^1(\mathbb{R}^+;\mathbb{L}^2(\Omega))$. 
\end{subequations}

\subsection{Limit as $\eta$ tends to $0$}\label{subsect:EtaLimit}
Since $\mathbb{H}^1(\Omega)$ is continuously imbedded in
$\mathcal{C}^0\big(\rbrack-\hauteursous,\hauteursur\lbrack\setminus\{0\};\mathbb{L}^4(\base)\big)$, 
$\mathrm{E}_s^\eta(\bm{m}_0)$ remains bounded independently of
$\eta$ and converges to $\mathrm{E}_s(\bm{m}_0)$ . Thus, using~\eqref{eq:EnergyInequalityLayer} and the
constraint $\lVert\bm{m}_\eta\rVert=1$ almost everywhere:
\begin{itemize}
\item $\bm{m}_{\eta}$  is bounded in
  $\mathbb{L}^\infty(\mathbb{R}^+\cart\Omega)$ by $1$.
\item $\nabla \bm{m}_{\eta}$ is bounded in $\mathrm{L}^\infty(\mathbb{R}^+;\mathbb{L}^2(\Omega))$ independently
of $\eta$.
\item $\frac{\partial\bm{m}_{k,\eta}}{\partial t}$ is bounded in $\mathrm{L}^2(\mathbb{R}^+;\mathbb{L}^2(\Omega))$ independently
of $\eta$.
\item $\bm{h}_{k,\eta}$  is bounded in in $\mathrm{L}^\infty(\mathbb{R}^+;\mathbb{L}^2(\Omega))$ independently
of $\eta$.
\item $\bm{e}_{k,\eta}$  is bounded in in $\mathrm{L}^\infty(\mathbb{R}^+;\mathbb{L}^2(\Omega))$ independently
of $\eta$.
\end{itemize}
Thus, there exists $\bm{m}$ in
$\mathbb{L}^\infty(\mathbb{R}^+;\mathbb{H}^1(\Omega))$ and in 
$\mathbb{H}^1_{\mathrm{loc}}(\lbrack0,+\infty\lbrack;\mathbb{L}^2(\Omega))$,
$\bm{h}$ in $\mathbb{L}^\infty(\mathbb{R}^+;\mathbb{L}^2(\Omega))$
and $\bm{e}$ in $\mathbb{L}^\infty(\mathbb{R}^+;\mathbb{L}^2(\Omega))$ such that up to a subsequence
\begin{itemize}
\item $\bm{m}_{\eta}$ converges weakly to $\bm{m}$ in
 $\mathbb{H}^1(\rbrack0,T\lbrack\cart\Omega)$.
\item $\bm{m}_{\eta}$ converges strongly to $\bm{m}$ in
$\mathbb{L}^2(\rbrack0,T\lbrack\cart\Omega)$. 
\item $\bm{m}_{\eta}$ converges strongly to $\bm{m}$ in
$\mathcal{C}(\lbrack0,T\rbrack;\mathbb{L}^2(\Omega))$ and thus in
$\mathcal{C}(\lbrack0,T\rbrack;\mathbb{L}^p(\Omega))$ for all $1\leq p<+\infty$.
\item $\nabla\bm{m}_{\eta}$ converges weakly to $\nabla\bm{m}$ in
$\mathbb{L}^2(\rbrack0,T\lbrack\cart\Omega)$.
\item For all time $T$, $\nabla\bm{m}_{\eta}(t,\cdot)$ converges weakly to $\nabla\bm{m}(t,\cdot)$ in
$\mathbb{L}^2(\Omega)$.
\item $\frac{\partial\bm{m}_{\eta}}{\partial t}$ converges star weakly to
  $\frac{\partial\bm{m}}{\partial t}$ in $\mathrm{L}^\infty(\mathbb{R}^+;\mathbb{L}^2(\Omega))$.
\item $\bm{h}_{\eta}$ converges star weakly to $\bm{h}$ in $\mathrm{L}^\infty(\mathbb{R}^+;\mathbb{L}^2(\Omega))$.
\item $\bm{e}_{\eta}$ converges star weakly to $\bm{e}$ in $\mathrm{L}^\infty(\mathbb{R}^+;\mathbb{L}^2(\Omega))$.
\end{itemize}
As $\lVert\bm{m}^\eta\rVert=1$ almost everywhere,
$\lVert\bm{m}\rVert=1$ almost everywhere. Moreover, as
$\bm{m}_\eta(0,\cdot)=\bm{m}_0$, we have $\bm{m}(0,\cdot)=\bm{m}_0$.

For the reasons explained in \S\ref{subsect:nLimit},
we integrate~\eqref{eq:EnergyInequalityLayer} over $\lbrack T_1,T_2\rbrack$, and
compute the limit as $k$ tends to $+\infty$. All the volume terms
converge to their intuitive limit. After the limit is taken,
we drop the integral over $\lbrack T_1,T_2\rbrack$ and obtain that for
almost all $T>0$:
Taking the limit in the surfacic terms requires more work. For
easier understanding, 

First, the space $\mathbb{H}^1(\rbrack0,T\lbrack\cart\Omega)$ is compactly imbedded into
\begin{equation*}
\mathcal{C}^0(\lbrack-\hauteursous,0\rbrack;\mathbb{L}^2(\rbrack0,T\lbrack\cart\base)\otimes\mathcal{C}^0(\lbrack0,\hauteursur\rbrack;\mathbb{L}^2(\rbrack0,T\lbrack\cart\base)).
\end{equation*}
 This
is a direct application of Lemma~\ref{lemma:compactCL2imbedding} with
$\mathcal{O}=\rbrack0,T\lbrack \cart B$
and, thus a
direct consequence of the extended Aubin's
lemma~\ref{lemma:AubinSimon}. 
Therefore, $\bm{m}_\eta$ converges
strongly to $\bm{m}$ in
\begin{equation*}
\mathcal{C}^0(\lbrack-\hauteursous,0\rbrack;\mathbb{L}^2(\rbrack0,T\lbrack\cart\base)\otimes\mathcal{C}^0(\lbrack0,\hauteursur\rbrack;\mathbb{L}^2(\rbrack0,T\lbrack\cart\base)).
\end{equation*}
Since $\lVert \bm{m}_\eta\rVert=1$, the convergence is strong in 
\begin{equation*}
\mathcal{C}^0(\lbrack-\hauteursous,0\rbrack;\mathbb{L}^p(\rbrack0,T\lbrack\cart\base)\otimes\mathcal{C}^0(\lbrack0,\hauteursur\rbrack;\mathbb{L}^p(\rbrack0,T\lbrack\cart\base)),
\end{equation*}
for all $p<+\infty$.  

\begin{equation*}
\begin{split}
&\phantom{\leq}\limsup_{\eta\to0}\lVert\int_{T_1}^{T_2}\mathrm{E}^\eta_s(\bm{m}_\eta(t,\cdot))-\mathrm{E}^\eta_s(\bm{m}(t,\cdot))\rVert
\\&\leq
\limsup_{\eta\to0}\frac{1}{2\eta}\int_{-\eta}^{\eta}\int_{T_1}^{T_2}\iint_{\base}\left\lVert
  P(\bm{m}_\eta(t),\bm{m}_\eta^{*}(t))
-P(\bm{m}(t),\bm{m}^{*}(t))\right\rVert\ud x\ud y\ud z \ud t
\\&\leq
\limsup_{\eta\to0}\sup_{z\in\lbrack-\eta,\eta\rbrack}\int_{T_1}^{T_2}\iint_{\base}\left\lVert P(\bm{m}_\eta(t),\bm{m}_\eta^{*}(t))
-P(\bm{m}(t),\bm{m}^{*}(t))
\right\rVert\ud x\ud y
\\&\leq0.
\end{split}
\end{equation*} 
where $P$ is some polynomial.

Moreover, $\bm{m}(\cdot,\cdot)$ belongs to:
\begin{equation*}
\mathcal{C}^0\big(\lbrack-\hauteursous,0\rbrack;\mathbb{L}^p(\rbrack0,T\lbrack\cart\base)\big)
\otimes
\mathcal{C}^0\big(\lbrack0,\hauteursur\rbrack;\mathbb{L}^p(\rbrack0,T\lbrack\cart\base)\big).
\end{equation*}
Therefore, we have
\begin{equation*}
\begin{split}
&\phantom{\leq}
\lim_{\eta\to0}\int_{T_1}^{T_2}\lVert\mathrm{E}^\eta_s(\bm{m}(t,\cdot))-\mathrm{E}_s(\bm{m}(t,\cdot))\rVert
\\&\leq \lim_{\eta\to0}\frac{1}{2\eta}\int_{-\eta}^{\eta}\iint_{\base}\left\lVert\big(P(\bm{m}(t),\bm{m}^{*}(t))
-P(\bm{m}(x,y,0^+,t),\bm{m}(x,y,0^-,t))\big)
\right\rVert\ud x\ud y\ud t
\\&\leq \lim_{\eta\to0}\sup_{\lVert z\rVert<\eta}\int_{T_1}^{T_2}\iint_{\base}\left\lVert P(\bm{m}(z,T),\bm{m}^{*}(z,t))
-P(\bm{m}(x,y,0^+,T),\bm{m}(x,y,0^-,t))
\right\rVert\ud x\ud y\ud t
\\&\leq 0.
\end{split}
\end{equation*}
Hence, the integral over $\lbrack T_1,T_2\rbrack$ of
inequality~\eqref{eq:EnergyInequality} hold for all $0<T_1<T_2$,
therefore inequality~\eqref{eq:EnergyInequality} is satisfied for
almost all $t>0$.

We take the limit in~\eqref{eq:WeakFormulationLayerMagnetization} as $\eta$ tends
to $0$. All the volume terms converges to their intuitive limit.
Moreover, because of the strong  convergence, along a subsequence, of
$\bm{m}_\eta$ to $\bm{m}$ in 
\begin{equation*}
\mathcal{C}^0(\lbrack-\hauteursous,0\rbrack;\mathbb{L}^p(\rbrack0,T\lbrack\cart\base)\otimes\mathcal{C}^0(\lbrack0,\hauteursur\rbrack;\mathbb{L}^p(\rbrack0,T\lbrack\cart\base)),
\end{equation*}
for all $p<+\infty$,
we have 
\begin{align*}
\begin{split} 
\limsup_{\eta\to0}  \frac{1}{\eta}\bigg\lVert
\iint_{\rbrack0,T\lbrack\cart(B\cart\rbrack-\eta,\eta\lbrack)}
        (\bm{\nu}\cdot\bm{m}_{\eta})(\bm{m}_{\eta}\vect\bm{\nu})
	\cdot\bm{\varphi}(t,\bm{x}) \ud\bm{x}\ud t&\\
-\iint_{\rbrack0,T\lbrack\cart(B\cart\rbrack-\eta,\eta\lbrack)}
        (\bm{\nu}\cdot\bm{m})(\bm{m}\vect\bm{\nu})
	\cdot\bm{\varphi}(t,\bm{x})
\ud\bm{x}\ud t
\bigg\rVert&=0,
\end{split}\\
\begin{split}
\limsup_{\eta\to0}  \frac{1}{\eta}\bigg\lVert
\iint_{\rbrack0,T\lbrack\cart(B\cart\rbrack-\eta,\eta\lbrack)}
	(\bm{m}_{\eta}\vect
       \bm{m}^{*}_{\eta})\cdot\bm{\varphi}(t,\bm{x}) \ud\bm{x}\ud t&\\
-\iint_{\rbrack0,T\lbrack\cart(B\cart\rbrack-\eta,\eta\lbrack)}
	(\bm{m}\vect
       \bm{m}^{*})\cdot\bm{\varphi}(t,\bm{x})
\ud\bm{x}\ud t
\bigg\rVert
&=0,
\end{split}\\					
\begin{split}
\limsup_{\eta\to0}  \frac{1}{\eta}\bigg\lVert
	\frac{1}{\eta}
		\iint_{\rbrack0,T\lbrack\cart(B\cart\rbrack-\eta,\eta\lbrack)}
	(\bm{m}_{\eta}\cdot\bm{m}_{\eta}^{*})(\bm{m}_{\eta}\vect\bm{m}^{*}_{k,\eta})
	\cdot\bm{\varphi}(t,\bm{x}) \ud\bm{x}\ud t&\\
-		\iint_{\rbrack0,T\lbrack\cart(B\cart\rbrack-\eta,\eta\lbrack)}
	(\bm{m}\cdot\bm{m}^{*})(\bm{m}\vect\bm{m}^{*})
	\cdot\bm{\varphi}(t,\bm{x})
\ud\bm{x}\ud t\bigg\rVert&=0.
\end{split} 
\end{align*}
Since 
$\bm{m}$ belongs to 
\begin{equation*}
\mathcal{C}^0(\lbrack-\hauteursous,0\rbrack;\mathbb{L}^p(\rbrack0,T\lbrack\cart\base)\otimes\mathcal{C}^0(\lbrack0,\hauteursur\rbrack;\mathbb{L}^p(\rbrack0,T\lbrack\cart\base)),
\end{equation*}
each surface term also converges to its surface intuitive
limits. Therefore, the weak
formulation~\eqref{eq:WeakFormulationMagnetization} is also satisfied.

We take the limits as $\eta$ tends to $0$ in~\eqref{eq:WeakFormulationLayerExcitation}
and~\eqref{eq:WeakFormulationLayerExcitation}. All the volume terms
converges to their intuitive limit.
Hence, relations~\eqref{eq:WeakFormulationExcitation} 
and~\eqref{eq:WeakFormulationElectric} are satisfied.
This finishes our proof of Theorem~\ref{theo:ExistenceWeakLandauLifshitzMaxwellSurfaceEnergies}.

%%%%%%%%%%%%%%%%%%%%%%%%%%%%%%%%%%%%%%%%%%%%%%%%%%%
%
%                Section omega limit set
%
%%%%%%%%%%%%%%%%%%%%%%%%%%%%%%%%%%%%%%%%%%%%%%%%%%%

\section{Characterization of the $\omega$-limit set}
\label{section-omegalimit}

We consider $(\bm{m},\bm h,\bm e)$ a weak solution to the Landau-Lifschitz-Maxwell system given by Theorem~\ref{theo:ExistenceWeakLandauLifshitzMaxwellSurfaceEnergies}.

We consider $\bm u\in \omega(m)$. There exists a non decreasing sequence $(t_n)_n$ such that $t_n\ds +\infty$, and $\bm m (t_n,.)\rightharpoonup \bm u$ in $\HH^1(\Omega)$ weak.
Since $\Omega $ is a smooth bounded domain, then $\bm m (t_n,.)$ tends to $\bm u$ in $\LL^p(\Omega)$ strongly for $p\in [1,6[$, and extracting a subsequence, we assume that $\bm m(t_n,.)$ tends to $\bm u$ almost everywhere, so that the saturation constraint $\vert \bm u \vert =1$ is satisfied almost everywhere.

In addition, we remark that for all $n$, $\vert \bm m(t_n,.)\vert=1$ almost everywhere, so that $\|\bm m(t_n,.)\|_{\LL^\infty(\Omega)}=1$. By interpolation inequalities in the $\L^p$ spaces, we obtain that for all $p<+\infty$, $\bm m(t_n,.)$ tends to $\bm u$ in $\LL^p(\Omega) $ strongly.

\vspace{3mm}
{\bf First Step.} we fix $a$ a non negative real number. for $s\in ]-a,a[$ and $x\in \Omega$, for $n$ large enough, we set 
\begin{equation*}U_n(s,x)=\bm m(t_n+s,x).\end{equation*}

We have the following estimate:
\begin{equation*}\begin{array}{rl}
\dsp \frac{1}{2a} \int_{-a}^a\int_\Omega |U_n(s,x)-\bm m (t_n,x)|^2dx ds = & \dsp \frac{1}{2a} \int_{-a}^a \int_\Omega \left| \int_0^s \dd{\bm m}{t}(t_n+\tau,x) d\tau \right|^2 dx ds\\\\
\leq &\dsp \frac{1}{2a} \int_{-a}^a|s|\int_\Omega \int_{t_n-a}^{+\infty}  \left| \dd{\bm m}{t}(\tau,x)\right|^2 d\tau dx ds\\
\\
\leq & \dsp a\int_{t_n-a}^{+\infty} \int_\Omega\left| \dd{\bm m}{t}(\tau,x)\right|^2 d\tau dx .
\end{array}\end{equation*}
Since $\dsp \dd{\bm m}{t}$ is in $\LL^2(\R^+\times \Omega)$, we obtain that 
\begin{equation*}\int_{-a}^a\int_\Omega |U_n(s,x)-\bm m (t_n,x)|^2dx ds\ds 0\mbox{ as $n$ tends to }+\infty.\end{equation*}
Since $\bm m (t_n,.)$ tends strongly  to $\bm u$ in $L^2(\Omega)$, then 
\begin{equation}
\label{cacaprout}
U_n\mbox{ tends strongly to } \bm u \mbox{ in }L^2(-a,a; \LL^2(\Omega)).\end{equation}

We remark now that the sequence $(\nabla U_n)_n$ is bounded in $L^\infty(-a,a;\LL^2(\Omega))$. In addition, $(\dd{U_n}{t})_n$ is bounded in $L^2(-a,a;\LL^2(\Omega))$. 
So, by applying Aubin's Lemma with $X=\HH^1(\Omega)$, $B=    \HH^{\frac{3}{4}}(\Omega)$, $Y=\LL^2(\Omega)$, $r=2$ and $p=+\infty$, we obtain that $(U_n)_n$ is compact in $\CC^0([-a,a];\HH^{\frac{3}{4}}(\Omega))$, so that
\begin{equation}
\label{cacaproutbis}
U_n\mbox{ tends strongly to } \bm u \mbox{ in }\CC^0([-a,a];\HH^{\frac{3}{4}}(\Omega)).\end{equation}

By continuity of the trace operator, since $\HH^{\frac{1}{4}}(\Gamma)\subset \LL^2(\Gamma)$, we obtain that $$\gamma(U_n)\longrightarrow \gamma(\bm u)\mbox{ strongly in }\CC^0([-a,a];\LL^2(\Gamma)).$$

\vspace{2mm}
In addition, by classical properties of the trace operator, for all $n$, $\|U_n\|_{L^\infty([-a,a]\times\Omega)}=1$, so $\|\gamma(U_n)\|_{L^\infty([-a,a]\times\Gamma)}\leq 1$. We obtain then in particular that\begin{equation*}
\gamma (U_n)\ds \gamma (\bm u)\mbox{ strongly in }\LL^p([-a,a]\times \partial \Omega), \; p<+\infty\end{equation*}

\vspace{2mm}
{\bf Second step.} We consider a smooth positive function $\rho_a$ compactly supported in $[-a,a]$ such that
\begin{equation*}\begin{array}{l}
\rho_a(\tau)=1\mbox{ for }\tau\in [-a+1,a-1],\\
\\
0\leq \rho_a\leq 1,\\
\\
\vert \rho_a'\vert\leq 2.
\end{array}\end{equation*}

For $n$ great enough, we set
\begin{equation*}\bm h_a^n(x)=\frac{1}{2a}\int_{-a}^a \bm h (t_n+s,x)\rho_a(s) ds \mbox{ and }\bm e_a^n(x)=\frac{1}{2a}\int_{-a}^a \bm e (t_n+s,x)\rho_a(s) ds.\end{equation*}
By construction of $(\bm m, \bm h ,\bm e)$, we know that $\bm h$ and $\bm e$ are in $\L^\infty(\mathbb R^+; \mathbb L ^2(\mathbb R ^3)).$ We have the following estimate:

\begin{equation*}
\begin{array}{rl}
\dsp \|\bm h_a^n\|_{\mathbb L^2(\mathbb R^3)}^2 = & \dsp \int_{x\in \mathbb R^3} \left|\frac{1}{2a}
\int_{-a}^a \bm h (t_n + s,x)\rho_a(s) ds\right|^2\\
\\
\leq &\dsp \frac{1}{2a}\int_{-a}^a \rho_a^2(s)ds \frac{1}{2a}  \int_{\mathbb R^3}\int_{-a}^a |\bm h (t_n+s,x)|^2ds dx\\
\\
\leq & \dsp \frac{2a+2}{2a} \|\bm h \|_{L^\infty(\mathbb R^+;\mathbb L^2(\mathbb R^3))}.
\end{array}\end{equation*}

Therefore, 
\begin{equation}
\label{estihna}
\forall a\geq 1, \; \forall n,\;  \|\bm h^n_a \|_{\mathbb L^2(\mathbb R^3)}\leq 2 \|\bm h \|_{L^\infty(\mathbb R^+;\mathbb L^2(\mathbb R^3))}.
\end{equation}

In the same way, we prove that
\begin{equation}
\label{estiena}
\forall a\geq 1, \; \forall n,\;  \|\bm e^n_a \|_{\mathbb L^2(\mathbb R^3)}\leq 2 \|\bm e \|_{L^\infty(\mathbb R^+;\mathbb L^2(\mathbb R^3))}.
\end{equation}

So for a fixed value of $a$ we can assume by extracting a subsequence that $\bm h^n_a$ and $\bm e^n_a$ converge weakly in $\mathbb L^2(\mathbb R^3)$ when $n$ tends to $+\infty$:
\begin{equation*}\bm h^n_a \rightharpoonup \bm h_a \mbox{ and }    \bm e^n_a \rightharpoonup \bm e_a\mbox{  weakly  in } \mathbb L^2(\mathbb R^3)\mbox{ when }n\rightarrow +\infty.\end{equation*} 

In the weak formulation \eqref{eq:WeakFormulationMagnetization}, we take $\bm \phi(t,x)=\frac{1}{2a}\rho_a(t-t_n)\bm \psi (x)$ where $\bm \psi\in {\mathcal D}(\overline{\Omega})$. We obtain after the change of variables $s=t-t_n$:

\begin{equation*}\frac{1}{2a} \int_{-a}^a \int_\Omega \left( \dd{U_n}{t}-\alpha U_n \vect \dd{U_n}{t}\right) \bm \psi(x) \rho_a(s) dx ds= T_1 + \ldots +T_6\end{equation*}
with
\begin{equation*}T_1=
(1+\alpha^2) A\frac{1}{2a}\int_{-a}^a \int_\Omega \sum_{i=1}^3
	\left(U_n(s,\bm{x})\vect\frac{\partial U_n}{\partial x_i}(t,\bm{x})\right)
	\cdot\frac{\partial\bm{\psi}}{\partial x_i}(\bm{x})\ud\bm{x}\ud s,\end{equation*}
	
\begin{equation*}T_2=(1+\alpha^2)\frac{1}{2a}\int_{-a}^a\int_\Omega
	\left(U_n(s,\bm{x})\vect\mathbf{K}(\bm{x})U_n(s,\bm{x})\right)		
	\cdot\bm{\psi}(\bm{x})\rho_a(s)\ud\bm{x}\ud s,\end{equation*}
	
\begin{equation*}T_3=
-(1+\alpha^2)\frac{1}{2a}\int_{-a}^a\int_\Omega
	\left(U_n(s,\bm{x})\vect\bm{h}(t_n+s,\bm{x})\right)		
	\cdot\bm{\psi}(\bm{x})\rho_a(s)\ud\bm{x}\ud s,\end{equation*}

\begin{equation*}T_4=
-(1+\alpha^2)K_s\frac{1}{2a}\int_{-a}^a
\int_{(\Gamma^\pm)}
	(\bm{\nu}\cdot\gamma U_n)(\gamma U_n\vect\bm{\nu})
	\cdot\gamma\bm{\psi}(\hat{\bm{x}})\rho_a(s)\ud S(\hat{\bm{x}})\ud s,\end{equation*}
	
\begin{equation*}T_5= 
-(1+\alpha^2)J_1\frac{1}{2a}\int_{-a}^a
\int_{(\Gamma^\pm)}
	(\gamma U_n \vect\gamma^{*}U_n)\cdot\gamma\bm{\psi}(\hat{\bm{x}})\rho_a(s)\ud S(\hat{\bm{x}})\ud s,	\end{equation*}
	
\begin{equation*}T_6=	
	-2(1+\alpha^2)J_2
		\frac{1}{2a}\int_{-a}^a
\int_{(\Gamma^\pm)}
	(\gamma U_n\cdot\gamma^{*}U_n)(\gamma U_n\vect\gamma^{*}U_n)
	\cdot\gamma\bm{\psi}(\hat{\bm{x}})\rho_a(s)\ud S(\hat{\bm{x}})\ud s.\end{equation*}

Now for a fixed value of the parameter $a$, we take the limit of the previous equation when $n$ tends to $+\infty$.

{\it Left hand side term:} we have the following estimates.

\begin{equation*}\begin{array}{r}
\dsp \left| 
\frac{1}{2a} \int_{-a}^a \int_\Omega \left( \dd{U_n}{t}-\alpha U_n \vect \dd{U_n}{t}\right) \bm \psi(x) \rho_a(s) dx ds\right|\\
\\
\dsp \leq (1+\alpha) \frac{1}{2a} \int_{-a}^a \rho_a(s) \|\dd{U_n}{t}(s,.)\|_{\mathbb L^2(\Omega)}\|\bm \psi\|_{\mathbb L^2(\Omega)}\\\\
\dsp \leq \frac{1}{\sqrt{2a} }\|\bm \psi\|_{\mathbb L^2(\Omega)}(1+\alpha)\left( \int_{-a}^a\int_\Omega \left|\dd{U_n}{t}\right|^2dxds\right)^{\frac{1}{2}}\\
\\
\leq\dsp  \frac{1}{\sqrt{2a} }\|\bm \psi\|_{\mathbb L^2(\Omega)}(1+\alpha)\left( \int_{t_n-a}^{+\infty}\int_\Omega \left|\dd{\bm m}{t}\right|^2dxds\right)^{\frac{1}{2}}
\end{array}\end{equation*}
Since $\dsp \dd \bm m t\in L^2(\mathbb R^+;\mathbb L^2(\Omega))$, the last right hand side term tends to zero when $n$ (and so $t_n$) tends to $+\infty$. Therefore
\begin{equation*}\frac{1}{2a} \int_{-a}^a \int_\Omega \left( \dd{U_n}{t}-\alpha U_n \vect \dd{U_n}{t}\right) \bm \psi(x) \rho_a(s) dx ds\longrightarrow 0 \mbox{ when }n\longrightarrow +\infty.\end{equation*}

{\it Limit for $T_1$:} since $U_n\longrightarrow \bm u$ strongly in $\mathbb L^2([-a,a]\times \Omega)$, since $\dsp \dd{U_n}{x_i}\rightharpoonup \dd{\bm u}{x_i}$ in $\mathbb L^2(]-a,a[\times \Omega)$ weak, we obtain that
\begin{equation*}T_1\longrightarrow (1+\alpha^2)A\frac{1}{2a} \int_{-a}^a \rho_a(s) ds \int_{\Omega}
  \sum_{i=1}^3
	\left(\bm u(\bm{x}\vect\frac{\partial \bm u}{\partial x_i}(\bm{x})\right)
	\cdot\frac{\partial\bm{\psi}}{\partial x_i}(\bm{x})\ud\bm{x}.\end{equation*}

{\it Limit for $T_2$:} since $U_n$ tends to $\bm u$ strongly in $\mathbb L^2([-a,a]\times \Omega)$,
\begin{equation*}T_2\longrightarrow (1+\alpha^2)A\frac{1}{2a} \int_{-a}^a \rho_a(s) ds \int_{\Omega}
\left(\bm u(\bm{x})\vect\mathbf{K}(\bm{x})\bm u(\bm{x})\right)		
	\cdot\bm{\psi}(\bm{x})\ud\bm{x}.\end{equation*}

{\it Limit for $T_3$:} we write
\begin{equation*}T_3=-(1+\alpha^2)\int_\Omega \bm u \vect \bm h^n_a\bm \psi dx + (1+\alpha^2)\frac{1}{2a} \int_{-a}^a\int_\Omega (\bm u - U_n)h(t_n+s, x)\bm \psi(x)\rho_a(s) dx ds.\end{equation*}
We estimate the right hand side term as follows:
\begin{equation*}\begin{array}{r}
\dsp \left| \frac{1}{2a}
\frac{1}{2a} \int_{-a}^a\int_\Omega (\bm u - U_n)h(t_n+s, x)\bm \psi(x)\rho_a(s) dx ds\right|\hspace{1cm}\\
\\
\dsp \leq \|\bm \psi\|_{\mathbb L^\infty(\Omega)}\|\bm u -U_n\|_{\mathbb L^2(-a,a\times \Omega)} \|\bm h\|_{\mathbb L^2([t_n-a,t_n+a]\times \Omega)}.
\end{array}\end{equation*}
So since $U_n$ tends to $\bm u$ in $\mathbb L^2(-a,a\times \Omega)$, we obtain that
\begin{equation*}T_3\longrightarrow -(1+\alpha^2)\int_\Omega \bm{u} \vect \bm h_a\bm \psi dx.\end{equation*}

{\it Limit for $T_4$, $T_5$ and $T_6$:} since $\gamma(U_n)\longrightarrow \gamma(\bm{u})$ strongly in $\mathbb L^p([-a,a]\times \Gamma^\pm)$ for $p<+\infty$, the same occurs for $\gamma^*(U_n)$ so that we obtain:
\begin{equation*}T_4\longrightarrow -(1+\alpha^2)K_s \frac{1}{2a} \int_{-a}^a \rho_a(s)\ud s \int_{(\Gamma^\pm)}
	(\bm{\nu}\cdot\gamma \bm{u})(\gamma \bm u \vect\bm{\nu})
	\cdot\gamma\bm{\psi}(\hat{\bm{x}})\ud S(\hat{\bm{x}}),\end{equation*}
	
\begin{equation*}T_5\longrightarrow
-(1+\alpha^2)J_1\frac{1}{2a}\int_{-a}^a\rho_a(s)\ud s
\int_{(\Gamma^\pm)}
	(\gamma \bm{u} \vect\gamma^{*}\bm{u})\cdot\gamma\bm{\psi}(\hat{\bm{x}}))\ud S(\hat{\bm{x}}),	\end{equation*}
	
\begin{equation*}T_6\longrightarrow
	-2(1+\alpha^2)J_2
		\frac{1}{2a}\int_{-a}^a\rho_a(s)\ud s
\int_{(\Gamma^\pm)}
	(\gamma \bm{u}\cdot\gamma^{*}\bm{u})(\gamma \bm{u}\vect\gamma^{*}\bm{u})
	\cdot\gamma\bm{\psi}(\hat{\bm{x}}))\ud S(\hat{\bm{x}}).\end{equation*}

So we obtain that $\bm{u}$ satisfies for all $\bm \psi\in {\mathcal D}'(\overline{\Omega})$:

\begin{equation*}
\begin{array}{l}
\dsp A \int_{\Omega}
  \sum_{i=1}^3
	\left(\bm u(\bm{x}\vect\frac{\partial \bm u}{\partial x_i}(\bm{x})\right)
	\cdot\frac{\partial\bm{\psi}}{\partial x_i}(\bm{x})\ud\bm{x} 
+ A \int_{\Omega}
\left(\bm u(\bm{x})\vect\mathbf{K}(\bm{x})\bm u(\bm{x})\right)		
	\cdot\bm{\psi}(\bm{x})\ud\bm{x}\\
	\\
	\dsp 
	-\frac{2a}{\int_{-a}^a \rho_a(s)\ud s}(1+\alpha^2)\int_\Omega \bm{u} \vect \bm h_a\bm \psi dx 
-K_s  \int_{(\Gamma^\pm)}
	(\bm{\nu}\cdot\gamma \bm{u})(\gamma \bm u \vect\bm{\nu})
	\cdot\gamma\bm{\psi}(\hat{\bm{x}})\ud S(\hat{\bm{x}})\\
	\\
	\dsp 
-J_1\int_{(\Gamma^\pm)}
	(\gamma \bm{u} \vect\gamma^{*}\bm{u})\cdot\gamma\bm{\psi}(\hat{\bm{x}}))\ud S(\hat{\bm{x}})
-2J_2\int_{(\Gamma^\pm)}
	(\gamma \bm{u}\cdot\gamma^{*}\bm{u})(\gamma \bm{u}\vect\gamma^{*}\bm{u})
	\cdot\gamma\bm{\psi}(\hat{\bm{x}}))\ud S(\hat{\bm{x}})=0.
\end{array}\end{equation*}

We remark that by density, we can extend this equality for all $\bm \psi \in  \mathbb H^1(\Omega)$.

We take now the limit when $a$ tends to $+\infty$. By definition of $\rho_a$ we obtain that
\begin{equation*}\frac{2a}{\int_{-a}^a \rho_a(s)\ud s}\longrightarrow 1.\end{equation*}

Concerning $h_a$, by taking the weak limit in Estimate \eqref{estihna}, we obtain that:
\begin{equation}
\label{estiha}
\forall a\geq 1, \; 
\| \bm h_a \|_{\mathbb L^2(\mathbb R ^3)}\leq 2       \| \bm h  \|_{L^\infty(\mathbb R^+; \mathbb L^2(\mathbb R ^3))}.
\end{equation}
So by extracting a subsequence, we can assume that
\begin{equation*}\bm h_a  -\hspace{-2mm}\rightharpoonup \bm H \mbox{ in } \mathbb L^2(\mathbb R^3) \mbox{ weak  when }a\longrightarrow +\infty.
\end{equation*}

In \eqref{eq:WeakFormulationExcitation}, we take $\bm \psi(t,x)=\theta_a(t-t_n)\nabla \xi(x)$ where $\xi \in {\mathcal D}'(\mathbb R^3)$ and where
\begin{equation*}\theta_a(t)=\int_a^t \rho_a(s)\ud s.\end{equation*}
We obtain then that
\begin{equation*}\begin{array}{r}
\dsp -\mu_0\int_{-a}^a\int_{\mathbb{R}^3}(\bm{h}(t_n+s,\bm x)+\overline{U_n(s,\bm x)})\cdot
\nabla \xi (\bm x) \rho_a(s)\ud\bm{x}\ud s\\
\\
\dsp 
=\mu_0\int_{\mathbb{R}^3}(\bm{h}_0+\overline{\bm{m}_0})\cdot \nabla \xi(\bm x) \theta_a(0) \ud\bm{x}=0\end{array}
\end{equation*}
since $\div (\bm h_0 + \overline{\bm m_0})=0$

So for all $\xi \in {\mathcal D}'(\mathbb R^3)$, for all $a\geq 1$ and all $n$ great enough,
\begin{equation*}-\mu_0 \int_{\mathbb{R}^3}(\bm h_a^n(\bm x) +\frac{1}{2a}\int_{-a}^a \overline{U_n(s,\bm x)}\rho_a(s) \ud s) \cdot \nabla \xi (\bm x) \ud \bm x=0.\end{equation*}
We take the limit of this equality when $n$ tends to $+\infty$ for a fixed $a$:
\begin{equation*}-\mu_0 \int_{\mathbb{R}^3}(\bm h_a(\bm x) +\frac{1}{2a}\int_{-a}^a \rho_a(s) \ud s \overline{\bm u(\bm x)}) \cdot \nabla \xi (\bm x) \ud \bm x=0,\end{equation*}
and taking the limit when $a$ tends to $+\infty$, we get:
\begin{equation*}-\mu_0 \int_{\mathbb{R}^3}(\bm H(\bm x) + \overline{\bm u(\bm x)}) \cdot \nabla \xi (\bm x) \ud \bm x=0,\end{equation*}
that is 
\begin{equation*}\div (\bm H + \overline{\bm u})=0\mbox{ in }{\mathcal D}'(\mathbb R^3).\end{equation*}

In \eqref{eq:WeakFormulationElectric}, we take $\bm{\Theta}(t,x)=\frac{1}{2a}\rho_a(t-t_n) \xi(x)$, where $\xi \in {\mathcal D}'(\mathbb R^3)$. We obtain:
\begin{multline}\label{machin}
-\varepsilon_0\frac{1}{2a}\int_{-a}^a \int_{\mathbb{R}^3}\bm{e}(t_n+s,\bm x) \cdot
\rho_a'(s) \xi(\bm x) \ud\bm{x}\ud s
-\int_{\mathbb{R}^3}\bm{h}_a^n\cdot\Rot{\xi}\ud\bm{x}
\\+\sigma\int_{\Omega}\bm{e}_a^n\cdot \xi(\bm x)\ud \bm x    +\sigma \int_\Omega \frac{1}{2a}\int_{-a}^a \bm{f}(t_n+s, \bm x) \rho_a(s) \xi(\bm x)\ud\bm{x}\ud s
=\\=
\varepsilon_0\int_{\mathbb{R}^3}\bm{e}_0\cdot \xi(x) \rho_a(-t_n)\ud\bm{x}.
\end{multline}

For $n$ large enough, the right hand side term vanishes. We denote by $\gamma_a^n$ the term:
\begin{equation*}\gamma_a^n=-\varepsilon_0\frac{1}{2a}\int_{-a}^a \int_{\mathbb{R}^3}\bm{e}(t_n+s,\bm x) \cdot
\rho_a'(s) \xi(\bm x) \ud\bm{x}\ud s.\end{equation*}
We have:

\begin{equation*}\left\vert \gamma_a^n\right\vert \leq \frac{\varepsilon_0}{a}\| \xi\|_{L^2(\mathbb R^3)} \|\bm e \|_{L^\infty(\mathbb R^+;\mathbb L^2(\mathbb R^3))}.\end{equation*}
So for a fixed $a$, we can extract a subsequence till denoted $\gamma_a^n$ which converges to a limit $\gamma_a$ such that
\begin{equation*}\vert \gamma_a\vert\leq \frac{\varepsilon_0}{a}\| \xi\|_{L^2(\mathbb R^3)} \|\bm e \|_{L^\infty(\mathbb R^+;\mathbb L^2(\mathbb R^3))}.\end{equation*}

Moreover, 
\begin{equation*}
\begin{split}
&\phantom{\leq}\left\vert \frac{1}{2a}\int_{-a}^a \int_\Omega \bm{f}(t_n+s, \bm x)
  \rho_a(s) \xi(\bm x)\ud\bm{x}\ud s\right\vert \\
&\leq\frac{1}{2a}\left(\int_{t_n-a}^{t_n+a} \|\bm f(s,\cdot) \|^2_{\mathbb
    L^2(\Omega)}\ud s\right)^{\frac{1}{2}}\left(\int_{-a}^a
  (\rho_a(s))^2\ud s\right)^{\frac{1}{2}}\|\xi\|_{\mathbb
  L^2(\Omega)}.
\end{split}
\end{equation*}
So
\begin{equation*}\left\vert \frac{1}{2a}\int_{-a}^a \int_\Omega \bm{f}(t_n+s, \bm x) \rho_a(s) \xi(\bm x)\ud\bm{x}\ud s\right\vert \leq\frac{1}{\sqrt{2a}}\|\xi\|_{\mathbb L^2(\Omega)}
\left(\int_{t_n-a}^{+\infty} \|\bm f(s,\cdot) \|^2_{\mathbb L^2(\Omega)}\ud s\right)^{\frac{1}{2}}\end{equation*}
thus for a fixed $a$, since $\bm f\in \mathbb L^2(\R^+\times \Omega)$, this term tends to zero as $n$ tends to $+\infty$.

Therefore taking the limit when $n$ tends to $+\infty$ in \eqref{machin} we obtain:
\begin{equation*}\gamma_a -\int_{\mathbb{R}^3}\bm{h}_a\cdot\Rot{\xi}\ud\bm{x}+\sigma \int_{\Omega}\bm{e}_a\cdot \xi(\bm x)\ud \bm x =0.\end{equation*}

Taking now the limit when $a$ tends to $+\infty$ yields
\begin{equation}
\label{bidule}
-\int_{\mathbb{R}^3}\bm{H}\cdot\Rot{\xi}\ud\bm{x}+\sigma \int_{\Omega}\bm{E}\cdot \xi(\bm x)\ud \bm x =0,
\end{equation}
where $\bm E$ is a weak limit of a subsequence of $(\bm e_a)_a$.

In the same way, in \eqref{eq:WeakFormulationExcitation}, we take $\bm \psi (t,\bm x)=\rho_a(t-t_n)\xi(\bm x)$. By the same arguments, we obtain that
\begin{equation*}\int_{\mathbb R^3} \bm E \Rot \xi=0,\end{equation*}
that is $\Rot E=0$ in ${\mathcal D}'(\mathbb R^3)$.

So we remark the $\bm E$ is in $\mathbb H_{curl}(\mathbb R^3)$ and by density of ${\mathcal D}(\mathbb R^3)$ in this space, we can take $\xi = \bm E$ in \eqref{bidule}. We obtain then that
\begin{equation*}\sigma\int_\Omega \vert \bm E\vert ^2 =0.\end{equation*}
Therefore we obtain from \eqref{bidule} that 
\begin{equation*}\forall \xi \in {\mathcal D}(\mathbb R^3), \; \; \int_{\mathbb{R}^3}\bm{H}\cdot\Rot{\xi}\ud\bm{x}=0,
\end{equation*}
that is $\Rot \bm H =0$ in ${\mathcal D}'(\mathbb R^3)$.

So $\bm H$ satisfies:
\begin{equation*}\begin{array}{l}
\div(\bm H + \overline{\bm u})=0,\\
\\
\Rot \bm H =0.
\end{array}\end{equation*}
This concludes the proof of Theorem \ref{theo-omegalim}.

\section{Conclusion}
In this paper, we have proven the existence of solutions to the
Landau-Lifshitz-Maxwell system with nonlinear Neumann boundary
conditions arising from surface energies. We have also characterized
the $\omega$-limit set of those weak solutions. 

Further improvements should be possible. On the one hand, we expect that
extending these results to curved spacers should be possible. No
fundamental new idea should be necessary to carry out such an
extension of our results as long as the spacer fully separates the
domain in two. However, even in that case, the technicalities 
would lengthen the proof and the statement of the theorem as it would 
be necessary to write down geometric conditions on the spacers (the
spacer cannot share a tangent plane with the domain boundary as it
would create cusps). 

On the other hand, the construction of more regular solutions for this model remains open.

\bibliographystyle{plain}
\bibliography{LLMaxwSE-2014-02-HAL}

\end{document}